\newtheorem{thm}{Theorem}[section]
\newtheorem{lemma}[thm]{Lemma}
\newtheorem{observation}[thm]{Observation}
\newcommand{\brac}[1]{\left\lbrace #1 \right\rbrace}
\newcommand{\card}[1]{\left| #1 \right|}
\newcommand{\st}{\colon\ }
\date{}
\title{Guarding isometric subgraphs and \\ Cops and Robber in planar graphs}
\author{Sebasti\'an Gonz\'alez Hermosillo de la Maza\\
{\tt sghm511@gmail.com} \\
Department of Mathematics \\
Simon Fraser University\\
8888 University Drive\\
Burnaby, BC, Canada
\and
Bojan Mohar\thanks{Supported in part by the NSERC Discovery Grant R611450 (Canada) and by the Research Project J1-8130 of ARRS (Slovenia).}\\
{\tt mohar@sfu.ca}\\
Department of Mathematics \\
Simon Fraser University\\
8888 University Drive\\
Burnaby, BC, Canada
}
\begin{document}

\maketitle

\begin{abstract}
In the game of Cops and Robbers, one of the most useful results is that
an isometric path in a graph can be guarded by one cop. In this paper, we introduce
the concept of wide shadow in a subgraph, and use it to characterize all 1-guardable graphs.
As an application, we show that 3 cops can
capture a robber in any planar graph with the added restriction that at most two
cops can move simultaneously, proving a conjecture of Yang and strengthening a
classical result of Aigner and Fromme.
\end{abstract}

\noindent{\bf Keywords:}
Cops and robber, planar graph, lazy cops and robber, cop number, guardable subgraph, wide shadow.

\section{Introduction}

The game of \emph{cops and robbers} in the setting of one cop and one robber was introduced by Quillot \cite{QuilliotD1978} and studied independently by Nowakowski and Winkler \cite{NowakowskiDM1983}. The game was generalized by Aigner and Fromme in \cite{AignerDAM1984} to allow multiple cops.
The game is played on a connected graph $G$ by two players; one controls the moves of $k$ cops (where $k$ is a fixed positive integer), while the other one controls the robber. At the beginning, each cop chooses a vertex as his starting position, and after that the robber chooses his initial position. The players then move in alternate turns, and in each turn a player might stay at his current position, or move to an adjacent vertex. The \emph{cops win} if one of the cops eventually occupies the same vertex as the robber, a situation we will refer to as \emph{capturing the robber}, while the \emph{robber wins} if he is able to indefinitely prevent this from happening.

Aigner and Fromme \cite{AignerDAM1984} defined the \emph{cop-number} of a graph $G$, which we will denote by $c(G)$, to be the smallest integer $k$ such that $k$ cops can guarantee robber's capture on $G$ regardless of his strategy. Since for any graph $G$ we have $c(G) \leq \card{V(G)}$, the cop-number is well defined for every finite graph. Moreover, for any graph $G$ we have $c(G) \leq \gamma(G)$, where $\gamma(G)$ denotes the domination number of $G$.
A graph $G$ with $c(G)=1$, i.e., a single cop has a winning strategy, is called a \emph{cop-win graph}.

A lot of research has been done studying connections between topological properties of graphs and their cop-number. See \cite{BonatoA2017} for a survey and \cite{BonatoAMS2011} for more insight. The first result of this type is due to Aigner and Fromme \cite{AignerDAM1984}, who showed that $c(G) \leq 3$ for any planar graph $G$. The main tool they used was a lemma showing that a single cop can guard any isometric path in the graph (see Section \ref{sect:wide shadow} for details). This result is widely used and is the main reason that the game is so interesting. Thus, it is natural to ask if cops can guard other kinds of subgraphs. Lu and Wang \cite{LuWang18} found an infinite family of such graphs.
This paper goes much further and gives a characterization of those isometric subgraphs that can be guarded by using a single cop. See Theorem \ref{Hellyguard}. As it turns out, the class of guardable subgraphs consists of precisely those graphs that possess the Helly property with respect to vertex neighborhoods. This class is tightly related to absolute retracts (see \cite{BandeltJCTB1991,HellOLS2004}).

The proof of Aigner and Fromme \cite{AignerDAM1984} that isometric paths can be guarded is based on a notion of what we call here a \emph{shadow}.
The vertex, where the cop is placed to guard the path is referred to as the \emph{shadow of the robber}. This idea has been generalised in the context of graph homomorphisms in terms of absolute retracts, see Theorems \ref{1guardable} and \ref{bandelt}. We introduce a novel concept of a \emph{wide shadow}. This notion provides a powerful new tool in the theory of cop numbers. We use it to prove a conjecture of Yang \cite{YangCCC2018} concerning the following variant of the cops and robber game.
In \cite{OffnerAJC2014}, Offner and Ojakian introduced a variation where only one cop is allowed to move at each turn, and they referred to it as the \emph{one-active-cop} game. Shortly after, the same variation was introduced with different names, like \emph{lazy cops and robber} \cite{BalCPC2015}, and the \emph{one-cop-moves game} \cite{GaoCOA2017}. In this paper we follow the naming in \cite{GaoCOA2017}, and define the \emph{$k$-cops-move} number of a graph, $c_k(G)$, as the smallest integer such that $c_k(G)$ cops guarantee the robber's capture in $G$ with the restriction that at most $k$ cops can change their position at each turn.

Sullivan \emph{et al.}~\cite{SullivanA2016} showed that every graph $G$ on at most eight vertices satisfies $c_1(G) \leq 2$, and that there is a unique graph on nine vertices with $c_1(G)=3$. For many classes of planar graphs we have $c_1(G) = c(G)$, so they posed the question of whether there exists a planar graph $G$ with $c_1(G) \geq 4$. Gao and Yang constructed a planar graph for which $c_1(G) \geq 4$, and Yang \cite{YangCCC2018} conjectured that $c_2(G) \leq 3$ for any planar graph. Our second main result, Theorem \ref{twocopmove}, settles Yang's conjecture in the affirmative. Our concept of wide shadow is the main tool used in the proof.


\subsection{Basic notation}

All graphs in this paper are connected unless stated otherwise. Let $G$ be a graph and $X,Y \subseteq V(G)$. We will use $d_G(X,Y)$ or $d(X,Y)$ to denote the length of a shortest $(X,Y)$-path in $G$, omitting $G$ when there is no confusion. In the case $X$ has one vertex, we will write $d(x,Y)$ instead of $d(\brac{x},Y)$. The analogous will be done when $Y$ or both sets consist of a single vertex.

A subgraph $H$ of a graph $G$ is an \emph{isometric subgraph} of $G$ if for any $x,y\in V(H)$, $d_H(x,y)=d_G(x,y)$.
If $X\subseteq V(G)$, we denote by $G[X]$ the subgraph of $G$ induced by the vertex-set $X$.

Let $G$ be a graph, $H$ a subgraph of $G$, $x \in V(H)$ and $k$ a non-negative integer. We set $N_H^k[x] = \brac{y \in V(H)\st d_H(x,y) \leq k}$, and $N^k_H(x) = N^k_H[x] \setminus \brac{x}$. We will omit the superscript $k$ when $k=1$, and we will use $N^k(x)$ instead of $N^k_H(x)$ when it causes no confusion (usually when $H=G$). By a \emph{walk} in $G$ we mean a sequence $v_1v_2\dots v_s$ of vertices of $G$ such that $v_iv_{i+1}\in E(G)$ for each $i=1,\dots,s-1$. If $P = v_1v_2\cdots v_s$ and $Q = u_1u_2\cdots u_t$ are walks and $v_i = u_j = x$ for unique values of $i \in [s]$ and $j \in [t]$, then the walk $v_1v_2\cdots v_{i-1}xu_{j+1}\cdots u_t$ will be denoted by $PxQ$.

\section{Wide shadow and Helly graphs}\label{anew}
\label{sect:wide shadow}

Let $G$ be a graph and $H$ an isometric subgraph of $G$. For $k\ge1$, we say that $H$ is \emph{$k$-guardable} if, after finitely many moves, $k$ cops can move on vertices of $H$ and from that point on stay in $H$ such that, if the robber ever enters $H$, then he will be captured by one of these cops in the next turn.

One of the most important tools to study the game of cops and robbers is the following result of Aigner and Fromme \cite{AignerDAM1984} claiming that isometric paths can be guarded by a single cop.

\begin{thm}[\cite{AignerDAM1984}]\label{AignerandFromme}
	Every isometric path is $1$-guardable.
\end{thm}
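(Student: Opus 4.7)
The plan is to define a ``shadow'' projection $f\colon V(G) \to V(P)$ that maps each vertex of $G$ to a vertex of $P = p_0 p_1 \cdots p_\ell$, and to have the cop pursue this shadow along $P$. Concretely, for $v \in V(G)$, set $i(v) := \min(d_G(v, p_0), \ell)$ and $f(v) := p_{i(v)}$. Two properties of $f$ drive the argument. First, $f(p_k) = p_k$ for every $p_k \in V(P)$: indeed, since $P$ is isometric we have $d_G(p_k, p_0) = k \leq \ell$, so $i(p_k) = k$. Second, for every edge $uv \in E(G)$, the integers $d_G(u, p_0)$ and $d_G(v, p_0)$ differ by at most one, and hence so do $i(u)$ and $i(v)$; thus $f(u)$ and $f(v)$ are either equal or adjacent on $P$.

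The cop's strategy is to start at $p_0$ and, on each turn, move one step along $P$ toward the current shadow $f(r)$ of the robber (staying put if already there). I claim that after finitely many rounds the cop's position $p_c$ satisfies $c = i(r)$ immediately after each of his moves, and that this invariant is then preserved indefinitely. Granted this, the proof concludes as follows: if the robber ever moves onto some $p_k \in V(P)$, then $f(p_k) = p_k$ and, by the Lipschitz property of $f$, we have $|c - k| \leq 1$; consequently the cop either captures the robber at once (when $k = c$) or captures him on the following turn (when $k = c \pm 1$), which is precisely what $1$-guardability demands.

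The main obstacle is to prove that the cop actually catches the shadow. I would analyze the nonnegative integer potential $\phi := |c - i(r)|$. A cop move toward the shadow decreases $\phi$ by one (unless it is already zero), while the subsequent robber move changes $\phi$ by at most one; so $\phi$ is nonincreasing over a full round. The delicate case is when $\phi$ remains constant: this requires the robber's move to push the shadow one step \emph{away} from the cop, which in turn forces a strict unit change in $d_G(r, p_0)$ (upward when the cop is below the shadow, downward when above), and only when the shadow is not already pinned at an extreme of $[0, \ell]$. Since $d_G(r, p_0)$ lies in the bounded interval $[0, \mathrm{ecc}_G(p_0)]$, a bookkeeping on gross versus net changes shows that such ``no-progress'' rounds are limited to $O(\ell + \mathrm{ecc}_G(p_0))$ in total, and strict decreases of $\phi$ can occur at most $\phi_0 \leq \ell$ times; the cop therefore attains $\phi = 0$ within a bounded number of turns. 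Once $\phi = 0$ holds after a cop's move, the following robber move shifts the shadow by at most one, so the cop's next move along $P$ re-establishes $c = i(r)$, preserving the invariant forever and completing the proof.
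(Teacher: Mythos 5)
Your proof is correct, and it is essentially the original Aigner--Fromme argument: project the robber to the single shadow $f(r)=p_{\min(d(r,p_0),\ell)}$, observe that $f$ fixes $V(P)$ pointwise and is $1$-Lipschitz along edges, chase the shadow, and bound the number of rounds via the potential $\phi=|c-i(r)|$ together with the boundedness of $d(r,p_0)$. The paper, however, does not prove the theorem this way: it cites it and then re-derives it as a special case of Theorem~\ref{Hellyguard}, since paths are Helly graphs. In that route the single shadow is replaced by the \emph{wide shadow} $S_P(v)$, Lemma~\ref{wideshadow} shows it is nonempty and moves by at most one step per robber move, and Lemma~\ref{lem:Hellyiscopwin} (dismantlability of Helly graphs) lets the cop catch the wide shadow and then track it. The trade-off: your argument is elementary, self-contained, and gives an explicit $O(\ell)$-type bound on the pursuit phase, but it is tailored to paths; the paper's argument is more abstract but generalizes verbatim to all Helly graphs and, crucially for Section~\ref{sect:planar}, produces the wide shadow, whose non-singleton fibres on bypath-free paths are what make the leisurely-guarding strategy of Lemma~\ref{lax} possible. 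The only soft spot in your write-up is the termination bookkeeping, which is asserted rather than carried out; it does go through (the sign of $c-i(r)$ cannot flip before $\phi$ reaches $0$, each no-progress round forces a unit move of the truncated distance $i(r)$ away from the cop, and the net displacement of $i(r)$ is at most $\ell$ while its down-steps in that regime each cost the robber a decrease of $\phi$ by $2$), but you should spell this out if the proof is to stand on its own.
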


A necessary condition for being able to guard a subgraph $H$ is that for every position $v$ of the robber, there is a vertex of $H$ such that the cop being at this vertex is able to reach every vertex in $H$ at least as quickly as the robber. Such a vertex is called a \emph{shadow} of the robber.
To define a shadow on an isometric path $P$, pick an endpoint $s$ of the path. For any vertex $v \in V(G)$, let the \emph{shadow} of $v$ with respect to $s$ be the vertex $y \in V(P)$ such that $d(s,y) = d(s,v)$ when $d(s,v) \leq \card{E(P)}$, and otherwise let it be the endpoint of $P$ different from $s$.

More generally, let $H$ be an isometric subgraph of $G$.  For $v \in V(G)$ and $x \in V(H)$, let
$$\gamma(H,x,v) = \brac{y \in V(H)\mid d(y,x) \leq d(v,x)}.$$
Let us note that $\gamma(H,x,v)$ is the set of vertices in $H$ from which a cop can protect $x$ when the robber is at $v$. It is especially nice if a cop is at a vertex $y$ from which he can protect all vertices in $H$. Motivated by this interpretation,
we define the \emph{wide shadow} of $v$ on $H$ to be the set
\begin{equation}\label{eq:wideshadow}
  S_H(v) = \bigcap_{x \in V(H)} \gamma(H,x,v).
\end{equation}

The following restatement of the definition of the wide shadow shows more clearly that the wide shadow is actually composed of all possible shadows.

\begin{observation}
  A vertex $y\in V(H)$ belongs to the wide shadow $S_H(v)$ if and only if for every $x\in V(H)$, $d(y,x)\le d(v,x)$.
\end{observation}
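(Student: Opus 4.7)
The statement is essentially a tautological restatement of the definition, so the plan is simply to unfold the two definitions and observe that they chain together. The main obstacle, if there is one, is only to present the chain of equivalences cleanly rather than to overcome any genuine mathematical difficulty; in particular, no property of $G$ or $H$ beyond the definitions is needed, and isometry plays no role here.

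Concretely, I would argue as follows. By the definition (\ref{eq:wideshadow}), a vertex $y\in V(H)$ lies in $S_H(v)$ if and only if $y\in\gamma(H,x,v)$ for every $x\in V(H)$, since $S_H(v)$ is the intersection of these sets over all $x\in V(H)$. Next, using the definition $\gamma(H,x,v)=\{y\in V(H)\mid d(y,x)\le d(v,x)\}$, the condition $y\in\gamma(H,x,v)$ is equivalent to the inequality $d(y,x)\le d(v,x)$ (together with $y\in V(H)$, which is already part of our hypothesis). Combining the two equivalences yields: $y\in S_H(v)$ if and only if $d(y,x)\le d(v,x)$ holds for every $x\in V(H)$, which is exactly the claim.

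Since there are no subtleties, I would not expect any case analysis or auxiliary lemmas; a two-line proof consisting of the above chain of equivalences is what I anticipate the authors to give. The purpose of stating this as an observation is presumably rhetorical: it reframes the wide shadow so that later arguments can speak directly of the inequality $d(y,x)\le d(v,x)$ for all $x\in V(H)$, without having to pass through the intersection formula each time. The only thing to watch is to make clear that the quantification is over $x\in V(H)$ (not $V(G)$), matching the indexing set in (\ref{eq:wideshadow}).
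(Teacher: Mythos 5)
Your proof is correct and is exactly the definitional unfolding that the observation calls for; the paper itself states this as an observation without proof precisely because it follows immediately from (\ref{eq:wideshadow}) and the definition of $\gamma(H,x,v)$ in this way. No further comment is needed.
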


\begin{figure}[h]
\begin{center}
		\includegraphics[width=0.65\textwidth]{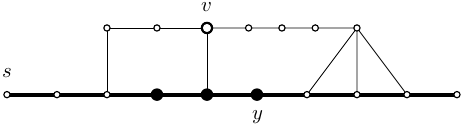}
\end{center}
\caption{The path $P$ consisting of thick edges is isometric in $G$. The vertex $y$ is the shadow of the vertex $v$ with respect to $s$. The three black vertices form the wide shadow of $v$.}\label{wideshadowfig}
\end{figure}

Figure \ref{wideshadowfig} shows an isometric path $H$ (depicted as the path using thick edges) and shows the shadow and the wide shadow of a vertex outside $P$.

It is natural to ask whether one can generalize Theorem \ref{AignerandFromme} to include other classes of graphs in addition to all paths. Quite recently, Lu and Wang \cite{LuWang18} found an infinite family of such graphs, called \emph{vertebrate graphs}. Their definition is complicated, and so is the proof that they can be guarded by a single cop. In this paper we go further by providing complete characterization of all such graphs. As shown in \cite{Seb_thesis}, our characterization yields a much simpler proof that vertebrate graphs are 1-guardable.

Let us go back to the definition (\ref{eq:wideshadow}) of the wide shadow. The condition that the wide shadow is non-empty, which is a necessary condition for having a shadow in $H$, is reminiscent to the Helly property from discrete geometry (see, e.g.,~\cite{DaGrKl63}). This leads to the following definition.

A family of sets $\mathcal{S}$ has the \emph{Helly property} if for every $\mathcal{T} \subseteq \mathcal{S}$ we have the following property: if $X_1 \cap X_2 \neq \varnothing$ for every $X_1, X_2 \in \mathcal{T}$, then $\bigcap \mathcal{T} \neq \varnothing$. A graph is a \emph{Helly graph} if the family $\brac{N^k[u]\st u \in V(G), k \geq 0}$ has the Helly property. It is well known that all trees are Helly graphs.

Helly graphs have the basic property required in order to be $1$-guardable in a graph $G$ whenever they appear as an isometric subgraph of $G$.
In general, the wide shadow of a vertex on an isometric subgraph $H$ may be empty, but this is not the case when $H$ is a Helly graph.

\begin{lemma}\label{wideshadow}
	Let $G$ be a graph and $H$ an isometric subgraph of $G$. If $H$ is a Helly graph, then the following statements hold.
	\begin{enumerate}
		\item[$(i)$]  For every $v \in V(G)$, $S_H(v) \neq \varnothing$.
		\item[$(ii)$] For every $uv \in E(G)$, and every $y \in S_H(u)$, we have $d(y,S_H(v)) \leq 1$.
	\end{enumerate}	
\end{lemma}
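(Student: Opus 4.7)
The plan is to reduce both statements to verifying that certain explicit families of balls in $H$ have pairwise non-empty intersections, and then invoke the Helly property of $H$. First, a useful reformulation: because $H$ is isometric in $G$ and $x \in V(H)$, we have
$$\gamma(H,x,v) = \{y\in V(H)\st d_H(y,x)\le d_G(v,x)\} = N_H^{d(v,x)}[x],$$
so the wide shadow $S_H(v)$ is exactly the intersection of a family of balls in $H$ centered at vertices of $H$ — precisely the sort of family to which the Helly property applies.

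For $(i)$, I would verify pairwise intersections of $\{N_H^{d(v,x)}[x]\st x \in V(H)\}$. Given $x_1,x_2\in V(H)$, the triangle inequality in $G$ together with isometricity yield $d_H(x_1,x_2) \le d_G(v,x_1) + d_G(v,x_2)$. Along any shortest $x_1$-$x_2$ path $w_0,w_1,\ldots,w_\ell$ in $H$, one can pick an index $i$ in the non-empty integer interval $[\ell - d(v,x_2),\, d(v,x_1)]$; then $w_i$ lies in $N_H^{d(v,x_1)}[x_1]\cap N_H^{d(v,x_2)}[x_2]$. By the Helly property, $S_H(v)\neq \varnothing$.

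For $(ii)$, I would show $N_H[y]\cap S_H(v)\neq \varnothing$ by adding the single ball $N_H[y] = N_H^1[y]$ to the family above and applying Helly once more. Pairwise intersections among the $N_H^{d(v,x_i)}[x_i]$ are handled by the argument from $(i)$. The only new pairs are $N_H[y]\cap N_H^{d(v,x)}[x]$ for $x \in V(H)$. Using $y\in S_H(u)$ and $uv\in E(G)$, we get $d(y,x)\le d(u,x)\le d(v,x)+1$. If $d(y,x)\le d(v,x)$ then $y$ itself lies in both balls; otherwise $d(y,x) = d(v,x)+1 \ge 1$, and the first vertex $y^*$ after $y$ on a shortest $y$-$x$ path in $H$ satisfies $d_H(y^*,y) = 1$ and $d_H(y^*,x) = d(v,x)$, so $y^*\in N_H[y]\cap N_H^{d(v,x)}[x]$.

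The main obstacle is recognizing that the dynamic-sounding statement $(ii)$ (from $y$, move one step toward $S_H(v)$) collapses to exactly the same Helly calculation as $(i)$, with one extra ball $N_H[y]$ thrown into the family; once that is seen, everything reduces to a short triangle-inequality case analysis. A secondary point that must be kept in mind throughout is that every ball in play has to be centered at a vertex of $H$ and computed with respect to $d_H$, which is exactly what isometricity of $H$ provides.
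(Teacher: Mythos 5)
Your proposal is correct and follows essentially the same route as the paper: both parts reduce to observing that $S_H(v)$ is an intersection of balls $N_H^{d(v,x)}[x]$, checking pairwise intersections via the triangle inequality and isometricity for $(i)$, and adjoining the single extra ball $N_H[y]$ before reapplying the Helly property for $(ii)$. You merely spell out the pairwise-intersection witnesses (the vertex $w_i$ on a shortest path, and $y^*$ in the case analysis) in more detail than the paper does.
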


\begin{proof}
	Let $x,y \in V(H)$. Since $H$ is isometric, we have $d_H(x,y) = d_G(x,y)$. By the triangle inequality we have $d_H(x,y) \leq d_G(x,v) + d_G(v,y)$, which implies $\gamma(H,x,v) \cap \gamma(H,y,v) \neq \varnothing$. Since $H$ is a Helly graph, we know that its ball-hypergraph $\mathcal{H}$ has the Helly property. In particular, the family $\{\gamma(H,x,v) \mid x\in V(H)\}$ has a nonempty intersection, i.e., $S_H(v) \neq \varnothing$. This proves $(i)$.
	
	Now, let $uv \in E(G)$. Notice that for every $x \in V(H)$, we have $d(x,u)-1 \leq d(x,v) \leq d(x,u)+1$. This implies $\gamma(H,x,v) \cap N_H[y] \neq \varnothing$ for every $x \in V(H)$ and $y \in S_H(u)$. Since $H$ is Helly, the intersecting family $\{\gamma(H,x,v) \mid x\in V(H)\}\cup \{N_H[y]\}$ has a nonempty intersection, i.e., we have $S_H(v) \cap N_H[y] \neq \varnothing$ for every $y \in S_H(u)$. It follows that for each $y \in S_H(u)$ either $y \in S_H(v)$ or there exists $z\in N_H(y) \cap S_H(v)$. Hence $d(y,S_H(v)) \leq 1$, completing the proof of $(ii)$.
\end{proof}

We say that a vertex $x \in V(G)$ is a \emph{corner} of $G$ if there exists $y \in N(x)$ such that $N[x] \subseteq N[y]$. It is easy to see that $c(G)=c(G-x)$ if $x$ is a corner. Thus, we can perform a \emph{corner-deletion} by removing a corner from $G$ without altering the cop number of the graph. A graph is said to be \emph{dismantlable} if it can be reduced to a single vertex by a sequence of corner-deletions. Thus, dismantlable graphs are cop-win. It was shown by Quilliot \cite{Quilliot1983} (and rediscovered later in \cite{BandeltJCTB1991}) that Helly graphs are dismantlable. We include this result as Lemma \ref{lem:Hellyiscopwin} below, and provide a proof for the sake of completeness.

\begin{lemma}\label{lem:Hellyiscopwin}
	If $G$ is a Helly graph, then $G$ is dismantlable and hence cop-win.
\end{lemma}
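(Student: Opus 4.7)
The plan is induction on $|V(G)|$; once dismantlability is established, cop-win follows from the remark preceding the lemma. The induction needs two facts: every Helly graph with at least two vertices has a corner, and deleting a corner preserves the Helly property.

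\textbf{Finding a corner.} I would pick any vertex $v \in V(G)$, let $x$ maximize $d(v,\cdot)$, and set $k = d(v,x) \geq 1$ (the case $k=0$ is trivial). Consider the family of closed balls
$$\mathcal{F} = \{N[w] : w \in N[x]\} \cup \{N^{k-1}[v]\}.$$
Any two $N[w_1], N[w_2]$ with $w_i \in N[x]$ share $x$, and for each $w \in N[x]$ the predecessor of $w$ on a shortest $v$-$w$ walk lies in $N[w] \cap N^{k-1}[v]$ (note $d(v,w) \leq k$ since $x$ is farthest from $v$). By the Helly property applied to $\mathcal{F}$, some $y$ lies in every member: then $N[x] \subseteq N[y]$ and $d(v,y) \leq k-1 < k$, so $y \neq x$ and $x$ is a corner with witness $y$.

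\textbf{Preservation under corner deletion.} Let $G' = G - x$. First, $G'$ is isometric in $G$: any shortest path through $x$ has the form $\cdots a\, x\, b \cdots$ with $a, b \in N(x) \subseteq N[y]$, so the segment $a\, x\, b$ can be replaced by $a\, y\, b$ (or shortened when $y \in \{a,b\}$) without increasing the length. Given any pairwise intersecting family $\mathcal{B}$ of closed balls in $G'$, the corresponding balls in $G$ are still pairwise intersecting, so by Helly in $G$ they share a vertex $p$. If $p \in V(G')$ we are done; if $p = x$, then for each ball $N^r_G[c]$ in $\mathcal{B}$ with center $c \neq x$, the first edge of a shortest $x$-$c$ path ends at some $z \in N(x) \subseteq N[y]$, giving $d_G(y,c) \leq d_G(x,c) \leq r$, and by isometry $d_{G'}(y,c) \leq r$. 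Hence $y$ lies in every member of $\mathcal{B}$, and $G'$ is Helly. The inductive hypothesis gives a dismantling sequence for $G'$, and prepending the deletion of $x$ yields one for $G$.

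\textbf{Main obstacle.} The corner-finding step is the conceptual heart, and the key trick is to include the single ball $N^{k-1}[v]$ alongside the neighborhoods of vertices in $N[x]$ in order to force $y \neq x$. The more delicate step is verifying that $G - x$ remains Helly; the technical content there is the substitution $d_G(y, c) \leq d_G(x, c)$ for $c \neq x$, which rests on the corner inclusion $N[x] \subseteq N[y]$ together with isometry of $G - x$ in $G$. If the authors are willing to cite the standard equivalence between Helly graphs and absolute retracts, the Helly-preservation step can instead be replaced by the one-line observation that the map $x \mapsto y$ (fixing all other vertices) is a retraction of $G$ onto $G - x$.
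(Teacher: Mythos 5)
Your proposal is correct and takes essentially the same route as the paper: the paper also produces a corner by applying the Helly property to the closed neighborhoods of $N[v]$ for a vertex $v$ at maximal distance $d$ from some $u$, together with the ball $N^{d-1}[u]$ that forces the resulting common vertex to differ from $v$, and then inducts after noting that corner deletion preserves the Helly property. The only difference is that you spell out the Helly-preservation step (via isometry of $G-x$ and rerouting through the witness $y$), which the paper states without proof.
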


\begin{proof}
Let $u$ and $v$ be vertices of $G$ with $d(u,v) = d$, where $d$ is the diameter of $G$. Let $T_u = N^{d-1}(u)$, and for $x \in N[v]$, let $T_x = N[x]$. Clearly $T_x \cap T_y \neq \varnothing$ for all $x,y \in N[v]$, and since $d(u,v) = d$, we get $T_u \cap T_x \neq \varnothing$. Since $G$ is a Helly graph, there exists $z \in T_u \cap \left(\bigcap_{x \in N[v]} T_x\right)$. Since $z \neq v$ and $z \in T_x$ for every $x \in N[v]$, we have $N[v] \subseteq N[z]$, so $G$ has a corner.

Notice that if $G$ is a Helly graph and $x$ is a corner of $G$, then $G - x$ is also a Helly graph. The result now follows by induction on $\card{V(G)}$.
\end{proof}

We are ready to present our first main result.

\begin{thm}\label{Hellyguard}
A graph $H$ is $1$-guardable in every graph $G$ that contains $H$ as an isometric subgraph if and only if $H$ is a Helly graph.
\end{thm}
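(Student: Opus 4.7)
The plan is to prove each implication separately. Sufficiency combines Lemma \ref{wideshadow} with the cop-win property of Helly graphs (Lemma \ref{lem:Hellyiscopwin}): the cop plays inside $H$ as though chasing a \emph{phantom} robber constrained to live in the wide shadow of the real robber. Necessity is proved by contrapositive; from any failure of the Helly property in $H$ I build a graph $G$ that contains $H$ isometrically and in which a single cop cannot guard $H$.

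For sufficiency, suppose $H$ is Helly and an isometric subgraph of $G$. The cop first moves to an arbitrary vertex of $H$, and thereafter stays inside $H$. I treat the real robber's position $v$ as inducing a phantom robber whose position $y$ lies in $S_H(v)$; Lemma \ref{wideshadow}(i) makes this well defined, and Lemma \ref{wideshadow}(ii) ensures that whenever the real robber traverses an edge $v \to v'$, the phantom can be updated to some $y' \in S_H(v') \cap N_H[y]$. Hence the phantom is a legitimate robber in $H$, and by Lemma \ref{lem:Hellyiscopwin} the cop catches the phantom in finitely many moves, landing on a vertex of $S_H(v)$. From then on the cop uses Lemma \ref{wideshadow}(ii) again to move, after each real-robber step $v \to v'$, to a vertex of $S_H(v')$ adjacent to his own previous position, thereby preserving the invariant that after each cop turn he lies in $S_H(v)$. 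If the real robber ever enters $H$ at a vertex $w$, then $y \in S_H(v)$ and $vw \in E(G)$ give $d_H(y, w) \le d_G(v, w) \le 1$, so the cop captures on his next turn.

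For necessity, assume $H$ is not Helly, and let $B_i = N_H^{k_i}[u_i]$, $i = 1, \dots, r$, be a family of balls that pairwise intersect while $\bigcap_i B_i = \varnothing$. The pairwise condition gives $d_H(u_i, u_j) \le k_i + k_j$, and emptiness of the global intersection forces each $k_i \ge 1$. I construct $G$ by attaching to $H$ one new vertex $v$ together with, for each $i$, a fresh internally disjoint path $P_i$ of length $k_i$ from $v$ to $u_i$. Using $k_i + k_j \ge d_H(u_i, u_j)$, a short triangle-inequality argument shows that $H$ is isometric in $G$. Since $d_G(v, u_i) \le k_i$ via $P_i$, we obtain $\gamma(H, u_i, v) \subseteq B_i$ for every $i$, whence $S_H(v) \subseteq \bigcap_i B_i = \varnothing$.

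To convert the vanishing wide shadow into a robber win, the robber starts at $v$ and stays at $v$ until the cop has committed to $V(H)$. Let $y \in V(H)$ be the cop's position at the moment when the robber, still at $v$, is about to start marching. Since $y \notin \bigcap_i B_i$, some index $i^*$ satisfies $d_H(y, u_{i^*}) \ge k_{i^*} + 1$; the robber then walks straight down $P_{i^*}$, one edge per turn, reaching $u_{i^*}$ on his $k_{i^*}$-th step. Over the $k_{i^*}$ alternating cop-moves that follow (one per round during the robber's march, the last of which is the single ``next-turn'' cop move after the entry), the cop can traverse at most $k_{i^*}$ edges in $H$; since $d_H(y, u_{i^*}) > k_{i^*}$, the cop cannot be at $u_{i^*}$ at that moment, so the robber enters $H$ uncaptured at $u_{i^*}$. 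Therefore $H$ is not 1-guardable in $G$. The main obstacle in the proof is precisely this move-counting, together with the auxiliary verifications that $H$ remains isometric in $G$ and that $d_G(v, u_i) \le k_i$ via the attached paths.
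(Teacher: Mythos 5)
Your proposal is correct and follows essentially the same route as the paper: for sufficiency, the cop chases and then tracks the (nonempty, slowly-moving) wide shadow using Lemmas \ref{wideshadow} and \ref{lem:Hellyiscopwin}, and for necessity you build the same gadget $G$ by attaching an apex vertex to the ball centers via pendant paths of lengths $k_i$ and run the same move-counting argument. Your added details (the explicit phantom-robber bookkeeping, the verification that $H$ stays isometric, and the observation that $S_H(v)=\varnothing$) are correct refinements of what the paper leaves implicit, not a different proof.
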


\begin{proof}
Let $G$ be a graph and $H$ an isometric subgraph which is a Helly graph. It follows from Lemmas \ref{wideshadow} and \ref{lem:Hellyiscopwin} that $H$ is 1-guardable: we can capture the wide shadow of the robber by Lemma \ref{lem:Hellyiscopwin}, and stay in it by Lemma \ref{wideshadow}. This proves the ``only if'' part of the theorem.

For the ``if'' part, suppose $H$ is 1-guardable, but not a Helly graph. Since $H$ is an isometric subgraph of itself, being 1-guardable means in particular that it is cop-win. Not being Helly means that there exist vertices $v_1,\dots, v_k$ and positive integers $d_1,\dots, d_k$ such that $N^{d_i}(v_i) \cap N^{d_j}(v_j) \neq \varnothing$ for all $1\le i<j\le k$, but $\bigcap_{i=1}^k N^{d_i}(v_i) = \varnothing$. Let $G$ be the graph obtained from $H$ by adding a vertex $x$ and internally disjoint paths $\brac{P_i}_{i=1}^k$, with $P_i$ having length $d_i$, joining $x$ with each $v_i$. Note that $H$ is an isometric subgraph in $G$.

Assume that the robber is in $x$. Since $H$ is $1$-guardable, a cop has a position in $H$ and a strategy that enables him to capture the robber when he enters $H$. Let us assume that the cop is at a vertex $u\in V(H)$ from which he can follow such strategy, at that there is robber's turn. Since $\bigcap_{i=1}^k N^{d_i}(v_i) = \varnothing$, there exists a vertex $v_j$ such that $d_H(u,v_j) > d_j$. Now, if the robber moves along $P_j$, he will be able to get to vertex $v_j$ in exactly $d_j$ steps. However, since $d_H(u,v_j) > d_j$,  the cop will be at least at distance two from $v_j$ when the robber enters $v_j$, so he cannot capture the robber in the next turn. This is a contradiction, so $H$ must be a Helly graph.
\end{proof}

Let us put Theorem \ref{Hellyguard} in a historical context. Helly graphs have known relationship with the notion of absolute retracts (see \cite{HellOLS2004}), and the notion of a hole of a graph as defined in \cite{HellOLS2004}.
A \emph{hole} in a graph $G$ is a set of $k \geq 3$ vertices $v_1, \dots, v_k$ and positive integers $d_1, \dots, d_k$ such that:
\begin{itemize}
	\item $G$ contains no vertex $u$ with $d(u,v_i) \leq d_i$ for all $i \in [k]$, and
	\item for every $i,j \in [k]$, we have $d(v_i,v_j) \leq d_i + d_j$.
\end{itemize}
Let us observe that the notion of a hole was used in the proof of Theorem \ref{Hellyguard}.

\begin{thm}[\cite{HellOLS2004}]\label{bandelt}
A graph is a Helly graph if and only if it is an absolute retract.
\end{thm}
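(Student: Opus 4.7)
To prove Theorem \ref{bandelt}, I would handle the two directions separately, using the wide-shadow machinery developed in this section. For \emph{absolute retract $\Rightarrow$ Helly}, I would argue by contraposition, essentially recycling the ``if'' part of the proof of Theorem \ref{Hellyguard}. If $H$ is not Helly, pick vertices $v_1,\dots,v_k\in V(H)$ and positive integers $d_1,\dots,d_k$ whose closed balls $N_H^{d_i}[v_i]$ pairwise meet but have empty joint intersection. Build $G$ by attaching to $H$ a new vertex $x$ together with internally disjoint paths $P_i$ of length $d_i$ from $x$ to $v_i$; the pairwise condition $d_H(v_i,v_j)\le d_i+d_j$ forces $H$ to be isometric in $G$. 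If a retraction $r\colon G\to H$ existed, distance non-expansion of homomorphisms together with $r|_H=\mathrm{id}_H$ would give $d_H(r(x),v_i)\le d_G(x,v_i)=d_i$ for every $i$, placing $r(x)$ in the empty intersection, a contradiction.

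For \emph{Helly $\Rightarrow$ absolute retract}, let $H$ be Helly and isometrically embedded in $G$; I want to construct a retraction $r\colon G\to H$. For every $v\in V(G)$, Lemma \ref{wideshadow}(i) gives $S_H(v)\neq\varnothing$, and for $v\in V(H)$ the vertex $v$ itself lies in $S_H(v)$, so I demand $r(v)=v$ on $H$ and $r(v)\in S_H(v)$ otherwise. The plan is a Zorn's lemma argument on partial retractions: consider pairs $(S,r)$ with $V(H)\subseteq S\subseteq V(G)$, $r|_H=\mathrm{id}_H$, $r(s)\in S_H(s)$ for every $s\in S$, and $r$ a homomorphism on $G[S]$. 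Given a maximal such $(S,r)$ with $S\neq V(G)$, pick $v\in V(G)\setminus S$ with a neighbor in $S$ and try to set
\[
r(v)\in S_H(v)\,\cap\!\!\bigcap_{u\in N_G(v)\cap S}\!\!N_H[r(u)].
\]
By the Helly property of $H$, nonemptiness of this family reduces to pairwise intersection; Lemma \ref{wideshadow}(ii) supplies $S_H(v)\cap N_H[r(u)]\neq\varnothing$, while the remaining pairs $N_H[r(u_1)]\cap N_H[r(u_2)]\neq\varnothing$ reduce to the distance bound $d_H(r(u_1),r(u_2))\le 2$.

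The main obstacle is precisely this last distance bound, since the only length-$2$ walk from $u_1$ to $u_2$ may pass through $v\notin S$, so the homomorphism property of $r$ on $G[S]$ does not yield it directly. I would address this by strengthening the Zorn invariant to require $r$ to be non-expansive with respect to the ambient metric: $d_H(r(s),r(s'))\le d_G(s,s')$ for all $s,s'\in S$. Under this stronger hypothesis, $d_H(r(u_1),r(u_2))\le d_G(u_1,u_2)\le 2$ is automatic, the Helly selection produces a valid $r(v)$, and one verifies (using Lemma \ref{wideshadow} together with a careful tracking of distances to $V(H)$) that the extended map remains non-expansive. The base case $S=V(H)$ is immediate from isometry of $H$ in $G$. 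Verifying that the stronger invariant propagates through every extension, which is where the Helly property of $H$ is exploited most heavily, is the technical heart of the argument.
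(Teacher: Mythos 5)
The paper does not prove this statement; it is quoted from the literature (\cite{HellOLS2004}, see also Bandelt--Pesch), so there is no internal proof to compare against and your attempt has to stand on its own. Your first direction (absolute retract $\Rightarrow$ Helly) is correct: it is exactly the gadget from the ``if'' part of the paper's Theorem \ref{Hellyguard} (attach a new vertex $x$ by internally disjoint paths of lengths $d_i$ to the $v_i$; the pairwise ball intersections give $d_H(v_i,v_j)\le d_i+d_j$, which makes $H$ isometric in $G$; a retraction would place $r(x)$ in the empty intersection). That half is fine.

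The gap is in the extension step of the second direction. You correctly identify that the pairwise intersection $N_H[r(u_1)]\cap N_H[r(u_2)]$ forces you to strengthen the Zorn invariant to global non-expansiveness $d_H(r(s),r(s'))\le d_G(s,s')$, but the Helly selection you then perform --- choosing $r(v)$ in $S_H(v)\cap\bigcap_{u\in N_G(v)\cap S}N_H[r(u)]$ --- does not propagate that invariant. For a non-neighbor $s\in S$ with $d_G(v,s)=k$ whose $G$-geodesics to $v$ leave $S$, nothing in your selection constrains $d_H(r(v),r(s))$: membership of $r(v)$ in $S_H(v)$ only bounds distances from $r(v)$ to vertices of $H$, not to images $r(s)$ of vertices outside $H$, and the unit balls only see the neighbors of $v$ in $S$. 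So the ``one verifies that the extended map remains non-expansive'' step fails as stated, and the induction does not close. The repair is to apply the Helly property to the larger family $\brac{N_H^{\,d_G(v,s)}[r(s)] \st s\in S}$ (this subsumes the wide-shadow constraints, since $r(w)=w$ for $w\in V(H)$): its pairwise intersections follow from $d_H(r(s),r(s'))\le d_G(s,s')\le d_G(s,v)+d_G(v,s')$ together with connectivity of $H$, and any vertex in the total intersection simultaneously lies in $S_H(v)$, is adjacent (in the reflexive sense) to $r(u)$ for every already-mapped neighbor $u$, and satisfies the full non-expansiveness invariant. With that enlargement your argument becomes the classical proof; without it, it has a genuine hole exactly at the point you flagged as ``the technical heart.''
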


\begin{thm}[\cite{BandeltJCTB1991}]\label{1guardable}
A graph is an absolute retract if and only if it has no holes.
\end{thm}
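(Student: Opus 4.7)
The plan is to derive Theorem \ref{1guardable} from Theorem \ref{bandelt}. By Theorem \ref{bandelt}, a graph is an absolute retract if and only if it is a Helly graph, so it suffices to prove that a graph $G$ is a Helly graph if and only if $G$ has no holes. This reduces the problem to a direct translation between the definition of a hole and the failure of the Helly property for the family of closed balls.

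The key observation is that the two clauses in the definition of a hole encode exactly the two ingredients of the Helly property for balls. First, for any vertices $v_i, v_j$ and positive integers $d_i, d_j$, the distance condition $d(v_i, v_j) \leq d_i + d_j$ is equivalent to $N^{d_i}[v_i] \cap N^{d_j}[v_j] \neq \varnothing$: one direction follows from the triangle inequality, and for the other direction one picks the vertex at distance $\min(d_i, d(v_i, v_j))$ from $v_i$ along a shortest $v_iv_j$-path, which lies in both closed balls. Second, the first clause of the hole definition says precisely that $\bigcap_{i=1}^k N^{d_i}[v_i] = \varnothing$.

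With this dictionary in hand I would verify both implications. For ``no holes $\Rightarrow$ Helly'', take any pairwise-intersecting subfamily of closed balls $\{N^{d_i}[v_i]\}_{i=1}^k$. If $k \leq 2$ the Helly conclusion is trivial, and if $k \geq 3$ then pairwise intersection gives the second hole clause for $v_1,\dots,v_k$ and $d_1,\dots,d_k$; if the family additionally had empty intersection, the first hole clause would hold too, yielding a hole and contradicting the hypothesis. Conversely, for ``Helly $\Rightarrow$ no holes'', any purported hole $(v_1,\dots,v_k;\, d_1,\dots,d_k)$ produces pairwise intersecting balls by the second clause, so the Helly property supplies a common vertex $u \in \bigcap_i N^{d_i}[v_i]$, which directly contradicts the first clause.

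The argument is short and has no real obstacle; the only subtle point is checking the corner case $k = 2$, where the Helly condition is automatic, which is precisely why the hole definition imposes $k \geq 3$. Apart from that, everything reduces to the elementary equivalence between pairwise ball-intersection and the triangle-inequality style distance bound, and to invoking Theorem \ref{bandelt} as a black box.
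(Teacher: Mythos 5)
The paper gives no proof of this statement: Theorem~\ref{1guardable} is imported from \cite{BandeltJCTB1991} as a known result (just as Theorem~\ref{bandelt} is imported from \cite{HellOLS2004}), so there is no in-paper argument to compare yours against. Taken on its own terms, your derivation is correct: invoking Theorem~\ref{bandelt} reduces the claim to ``Helly $\Leftrightarrow$ no holes,'' and your dictionary --- $d(v_i,v_j)\le d_i+d_j$ is equivalent to $N^{d_i}[v_i]\cap N^{d_j}[v_j]\ne\varnothing$ (valid since the paper assumes all graphs are connected), while the first hole clause is exactly $\bigcap_i N^{d_i}[v_i]=\varnothing$ --- is the right translation. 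Two corner cases in the direction ``no holes $\Rightarrow$ Helly'' should be spelled out, because the paper's Helly-graph definition ranges over all balls $N^k[u]$ with $k\ge 0$, whereas a hole requires \emph{positive} radii and at least three (distinct) centers: first, if some ball in the pairwise-intersecting subfamily has radius $0$, its center already lies in every other ball, so the total intersection is nonempty without any appeal to holes; second, if two balls share a center, one contains the other and the larger can be discarded without affecting either the pairwise-intersection hypothesis or the emptiness of the total intersection, after which a family with at most two distinct centers trivially has nonempty intersection, forcing $k\ge 3$. With those two remarks added your argument is complete; note only that it proves Theorem~\ref{1guardable} conditionally on Theorem~\ref{bandelt}, which is consistent with how the paper itself treats both statements (as quoted background rather than results it establishes).
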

	
These old results show that our Theorem \ref{Hellyguard} may have been at reach already 30 or more years ago. But we could not find it in the existing literature.

\subsection*{More on wide shadows}

Theorem \ref{Hellyguard} in particular implies Theorem \ref{AignerandFromme} since paths are Helly graphs. However, when $H$ is a path, there is a difference between this approach and the one usually used to guard a path: the wide shadow of the robber will in general be a set with more than one vertex of $H$ unless the robber's position satisfies some specific conditions. In Section \ref{sect:planar}, this will allow us to devise a strategy in which the cop guarding a path can rest from time to time, and will allow us to use those turns to move other cops, and thus maintain the required property that there are not too many cops active in any given turn.
	
In order to describe when the wide shadow of the robber will be more than a single vertex, we need to define an additional structure which we call a bypath, and show its connection with the wide shadow of the robber.
	
Let $G$ be a graph, $H$ a subgraph of $G$, and $P = v_1v_2 \cdots v_k$ an isometric path in $H$. A path $B = b_1b_2 \cdots b_t$ with $t\ge3$, is called a \emph{bypath} of $P$ in $H$ if $B \subseteq V(H)$,
$b_1 = v_i$, $b_t = v_j$, with $1\le i < j \le k$, $V(B) \cap V(P) = \brac{v_i,v_j}$, and the path $Pb_1Bb_tP$ is also an isometric path in $H$.
Note that in this case $j-i=t-1$.
The vertices $v_i$ and $v_j$ are called the \emph{branching vertices} of $B$, the path obtained after replacing vertices of $P$ with $B$ is called the \emph{$B$-bypath of $P$}, and we will denote it by $P_{\left\langle B \right\rangle}$.  A path $P$ is \emph{bypath-free} in $H$ if $H$ contains no bypath of $P$. Observe that if $P$ is bypath-free in $G$, then every subpath of $P$ is bypath-free in $G$. Notice that all paths of length $1$ are trivially bypath-free in $G$. In the following, we will refer to paths of length at least $2$ as non-trivial.
	
\begin{figure}[h]
	\begin{center}
		\includegraphics[width=.5\textwidth]{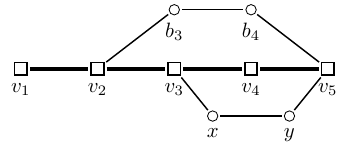}
	\end{center}
	\caption{The path $B = v_2b_3b_4v_5$ is a bypath of $P = v_1v_2v_3v_4v_5$, while $v_3xyv_5$ is not.}
\end{figure}
			
\begin{lemma}\label{bypath}
Let $G$ be a graph and $P$ a non-trivial isometric path in $G$. If $v\in V(G)\setminus V(P)$, then $\card{S_P(v)} = 1$ if and only if there exists a bypath $B$ of $P$ in $G$ such that $v \in V(B)$.
\end{lemma}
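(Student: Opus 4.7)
The plan is to parameterize the wide shadow $S_P(v)$ on $P = v_1v_2\cdots v_k$ using distances to $v$. Set $r_\ell = d(v, v_\ell)$. The membership condition $v_m \in S_P(v)$ is equivalent to $|m - \ell| \leq r_\ell$ for all $\ell \in [k]$, so $S_P(v) = \{v_m : L \leq m \leq R\}$ where $L = \max_\ell (\ell - r_\ell)$ and $R = \min_\ell (\ell + r_\ell)$. Lemma~\ref{wideshadow}(i) applied to the (Helly) path $P$ gives $L \leq R$, i.e., $S_P(v) \neq \varnothing$.

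For the ``$\Leftarrow$'' direction, let $B = b_1 \cdots b_t$ be a bypath of $P$ with endpoints $b_1 = v_i$, $b_t = v_j$, and $v = b_s$ for some $1 < s < t$. By the defining isometry of $P_{\langle B \rangle}$ we get $r_i = s - 1$ and $r_j = t - s$. Since $j - i = t - 1 = r_i + r_j$, the two constraints $|m - i| \leq r_i$ and $|m - j| \leq r_j$ alone force $m = i + s - 1$, so $S_P(v) \subseteq \{v_{i+s-1}\}$. Combined with $S_P(v) \neq \varnothing$, we conclude $|S_P(v)| = 1$.

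For the ``$\Rightarrow$'' direction, suppose $S_P(v) = \{v_m\}$, so $L = R = m$. Because $v \notin V(P)$, every $r_\ell$ is positive, hence the max is attained at some $\beta > m$ with $r_\beta = \beta - m$, and the min at some $\alpha < m$ with $r_\alpha = m - \alpha$. Choose such a pair $(\alpha, \beta)$ with $\alpha$ maximum and $\beta$ minimum. Then $r_\alpha + r_\beta = \beta - \alpha = d(v_\alpha, v_\beta)$, so $v$ lies on a shortest $(v_\alpha, v_\beta)$-path $B$ of length $\beta - \alpha \geq 2$. The main step is to show that no internal vertex of $B$ lies on $V(P)$. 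Suppose $v_\ell$ is such a vertex. Since $B$ is shortest and $P$ is isometric, the two subpaths of $B$ split at $v_\ell$ must both be shortest, which forces $\alpha < \ell < \beta$ and lengths $\ell - \alpha$ and $\beta - \ell$ for the halves. The vertex $v$ lies on exactly one half, and tracing its position along $B$ yields $d(v, v_\ell) = |m - \ell|$. This in turn makes $v_\ell$ a valid candidate for $\alpha$ (if $\ell < m$) or $\beta$ (if $\ell > m$), contradicting extremality ($\ell = m$ is impossible as $v \neq v_m$). Hence $B$ has no internal $V(P)$-vertex.

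It remains to verify $P_{\langle B \rangle}$ is isometric in $G$. Distances within $V(B)$ are correct since $B$ is a shortest path in $G$; distances within $V(P) \setminus \{v_{\alpha+1}, \ldots, v_{\beta-1}\}$ are correct since $P$ is isometric. For a $B$-internal vertex $b_s$ and $v_\ell$ with $\ell \leq \alpha$, the walk along $P_{\langle B \rangle}$ gives $d(b_s, v_\ell) \leq (s - 1) + (\alpha - \ell)$, while the triangle inequality $d(b_s, v_\ell) \geq d(v_\ell, v_\beta) - d(b_s, v_\beta) = (\beta - \ell) - (t - s) = (s - 1) + (\alpha - \ell)$ supplies the matching lower bound; the case $\ell \geq \beta$ is symmetric. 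Thus $B$ is a genuine bypath of $P$ through $v$. The main obstacle is the extremality argument in paragraph three, which requires choosing $\alpha$ and $\beta$ carefully so that shortest-path constraints force $B$ to avoid $V(P)$ internally.
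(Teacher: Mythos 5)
Your proof is correct and follows essentially the same route as the paper's: your interval parameterization $S_P(v)=\{v_m: L\le m\le R\}$ is just the paper's intersection of the sets $\gamma(P,v_\ell,v)$ written in coordinates, and in the reverse direction both arguments pick an extremal pair of witnesses on $P$ and splice together shortest paths from $v$ to them to form the bypath. Your write-up is somewhat more careful than the paper's in two spots the paper leaves implicit --- checking that the constructed path meets $V(P)$ only at its endpoints and that $P_{\langle B\rangle}$ is isometric --- but the underlying argument is the same.
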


\begin{proof}
Notice that if $B$ is a bypath of $P$ with branching vertices $u$ and $w$, then $d(v,u) + d(v,w) = d(u,w)$ for every $v\in V(B)$, so $\card{\gamma(P,u,v) \cap \gamma(P,w,v)} = 1$. By Lemma \ref{wideshadow} we know that $S_P(v) \neq \varnothing$, so the fact that $S_P(v) \subseteq \gamma(P,u,v) \cap \gamma(P,w,v)$ implies that $\card{S_P(v)} = 1$.
		
For the other direction, let $v \in V(G)\setminus V(P)$ such that $S_P(v) = \brac{x}$. Since $P$ is a path and $\gamma(P,u,v)$ is a nontrivial subpath of $P$ for every $u\in V(P)$, there exist vertices $u,w \in V(P)$ such that $\gamma(P,u,v) \cap \gamma(P,w,v) = \brac{x}$. Let $y,z$ be the vertices of $P$ such that $d(y,z)$ is minimum subject to the condition that $\gamma(P,y,v) \cap \gamma(P,y,v) = \brac{x}$. Note that
\begin{equation}\label{eq:distanceonP}
  d(y,v) = d(y,x),~ d(z,v) = d(z,x), \hbox{ and } d(y,z) = d(y,x)+d(x,z).
\end{equation}
		
Let $B_y$ and $B_z$ be isometric paths from $v$ to $y$ and $z$, respectively. Notice that $V(B_y) \cap V(B_z) = \brac{v}$ (if not, (\ref{eq:distanceonP}) would contradict the assumption of $P$ being isometric), so concatenating $B_y$ and $B_z$ results in a path $B$ from $y$ to $z$. Now, let us take $P' = PyBzP$. By (\ref{eq:distanceonP}), we have $d_P(y,z) = d_{P'}(y,z)$, and it is easy to see that $P'$ is an isometric path. Hence, $B$ is a bypath of $P$ in $G$.
\end{proof}

\section{Other ways to guard a subgraph}

While the problem of $1$-guarding an isometric subgraph is well understood thanks to our Theorem \ref{Hellyguard}, we may want to extend this notion further and guard an isometric subgraph $H$ of $G$ using several cops. This poses the problem of finding properties that imply that $H$ can be $k$-guarded for some $k \geq 2$. The idea of finding a partition of $H$ into subgraphs that are $t$-guardable for $t < k$ has been explored by Clarke in \cite{ClarkeT2002}, where the problem of partitioning a graph into isometric paths was studied. It would be of interest to look at the problem of partitioning a graph into isometric Helly graphs, in particular into trees.

On the other hand, we may ask if there is a function $f(k)$ such that, if $H$ is an isometric subgraph of $G$ and $c(H) = k$, then $H$ can be $f(k)$-guarded.  Unfortunately, this fails even for $k = 1$. It is not hard to check that the subgraph $H$ in Figure \ref{hajos} cannot be guarded by a single cop despite being isometric and dismantlable, but two cops suffice. In this section we provide a family of graphs, generalizing the one in Figure \ref{hajos} which contain dismantlable isometric subgraphs that require arbitrarily many cops to be guarded.

\begin{figure}[h]
	\begin{center}
			\includegraphics[width=.2\textwidth]{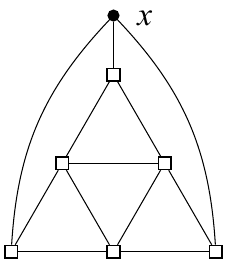}
	\end{center}
	\caption{The subgraph $H=G-x$ induced by the square vertices is dismantlable but cannot be guarded by a single cop.}\label{hajos}	
\end{figure}

Let $s\ge1$ and $t\ge3$ be integers, where $t=2m-1$ is odd. Let $T$ be a complete graph with $t$ vertices and let $\mathcal{M} = \dbinom{V(T)}{m}$ be the family of all $m$-subsets of $V(T)$. For each $X \in \mathcal{M}$, let $K_X$ be a copy of the complete graph on $s + m$ vertices. We define $H(t,s)$ as the graph obtained from the disjoint union of $T$ and all $K_X$ ($X\in \mathcal{M}$) by identifying $m$ vertices in $K_X$ with the set $X\subset V(T)$ for each $X\in \mathcal{M}$. Note that $H(t,s)$ is a chordal graph of diameter 2 and has $t+s\binom{t}{m}$ vertices.

\begin{thm}
For every positive integer $k \geq 3$ there exist a graph $G$ containing an isometric subgraph $H$ such that
$c(H) = 1$, but $k$ cops cannot guard $H$ in $G$.
\end{thm}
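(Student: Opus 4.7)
The plan is to take $t = 2k+1$, $m = k+1$, $s = 1$, and let $H := H(t,s)$ with the accompanying notation. Each $K_X \setminus X$ now consists of a single vertex, which I shall denote $y_X$. Form $G$ by adjoining one new vertex $x$ adjacent exactly to $\{y_X \st X \in \mathcal{M}\}$. Since any two $m$-subsets of the $(2m-1)$-set $V(T)$ intersect, $H$ has diameter $2$; it is also dismantlable (each $y_X$ is simplicial, and after removing all of them only the complete graph on $V(T)$ remains), so $c(H) = 1$. Finally, any $G$-path through $x$ has length at least $3$, which confirms that $H$ is an isometric subgraph of $G$.

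For the main claim, have the robber sit at $x$ forever. After any cops' turn, let $C \subseteq V(H)$ denote the cops' current positions, so $|C| \le k$. The robber's only potential entry points into $H$ are the vertices $y_X$, and $N_H[y_X] = V(K_X)$. Consequently, for the guarding property to hold the cops must ensure $C \cap V(K_X) \neq \varnothing$ for every $X \in \mathcal{M}$; otherwise the robber steps to $y_X$ and no cop can reach $y_X$ on the next turn. The whole task therefore reduces to proving that no $C \subseteq V(H)$ of size at most $k$ meets every $V(K_X)$.

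This is a short hitting-set argument. Split $C$ into $C_T = C \cap V(T)$ and $C_K = C \setminus V(T)$, and put $j = |C_T|$, so $|C_K| \le k - j$. Each vertex of $C_K$ lies in $K_X \setminus X$ for exactly one $X$, hence covers a single $V(K_X)$. The sets $V(K_X)$ avoided by $C_T$ are in bijection with the $m$-subsets of $V(T) \setminus C_T$, of which there are $\binom{2m-1-j}{m}$. At most $k-j$ of them are then knocked out by $C_K$, leaving at least $\binom{2m-1-j}{m} - (k-j)$ sets $V(K_X)$ uncovered. Because $j \le k \le m-1$, the inequality $\binom{2m-1-j}{m} \ge m - j > k - j$ holds (tight on the left only at $j = m-1$, where it reads $1 \ge 1$), so at least one $V(K_X)$ is missed. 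The main (modest) obstacle is exactly this binomial estimate, together with recognizing that $t = 2m-1$ is the precise threshold making the hitting number of $\{V(K_X)\}_{X \in \mathcal{M}}$ equal to $m = k+1$, thereby defeating $k$ cops.
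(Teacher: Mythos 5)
Your proof is correct, and while it is built on the same family $H(t,s)$ as the paper's (with the same combinatorial heart: for $t=2m-1$, no set of fewer than $m$ vertices can meet every clique $V(K_X)$), your realization of the ambient graph $G$ and of the robber's strategy is genuinely simpler. The paper takes $s>k$ and attaches one new vertex $v_S$ for \emph{every} transversal $S$ of the cliques $K_X\setminus X$ (exponentially many apex vertices), so that the robber can repeatedly enter $H$ and retreat to a fresh safe vertex $v_{S'}$; you observe that to refute guardability the robber only needs to enter $H$ \emph{once} without being caught on the following turn, which lets you take $s=1$ and a single apex $x$ adjacent to all the simplicial vertices $y_X$. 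You also make explicit the hitting-set estimate $\binom{2m-1-j}{m}>k-j$ that the paper only asserts (``no smaller set can have a vertex in each $K_X$''), and your case analysis there is sound, including the tight case $j=k=m-1$. One small inaccuracy: your justification of isometry (``any $G$-path through $x$ has length at least $3$'') is false as stated, since $y_Xxy_{X'}$ has length $2$; but the conclusion stands because $H$ has diameter $2$ and adding $x$ creates no new edges inside $V(H)$, so no distance within $H$ can drop. Net effect: your argument proves the same theorem with a much smaller graph $G$ and a one-move robber strategy, at the cost of nothing; the paper's more elaborate construction buys only the (unneeded for this statement) extra feature that the robber can evade indefinitely while entering $H$ infinitely often.
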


\begin{proof}
Let $k,s$ and $t$ be positive integers such that $t=2m-1$ is odd and $s,m > k$, and consider the graph $H = H(t,s)$.

Notice that any dominating set of $H$ must contain at least $m$ vertices since no smaller set can have a vertex in each $K_X$, and any subset of $V(T)$ with $m$ vertices dominates $H$. Also, a subset $A$ of $V(H)$ is dominating if and only if $A \cap V(K_X) \neq \varnothing$ for every $X$. We know that $c(H) = 1$ since $H$ is chordal. Also, $H$ has diameter 2, since every $X,Y \in \mathcal{M}$ have a nonempty intersection.

Let $\mathcal{A}$ be a family of all subsets $S \subseteq V(H)\setminus V(T)$ such that $\card{S \cap K_X} = 1$ for  each $X \in \mathcal{M}$.
Let $G$ be the graph obtained from $H$ by adding a vertex $v_S$ adjacent to every vertex in $S$, for each $S \in \mathcal{A}$. Since $H$ has diameter two, $H$ is an isometric subgraph of $G$. Suppose now that we have $k$ cops in $H$. Since $m > k$, there exists $X_0 \in \mathcal{M}$ such that there is no cop in $K_{X_0}$. Also, since $s > k$, there exists $S \in \mathcal{A}$ such that there is no cop in $N[v_S]$, so we can place the robber in $v_S$. Notice that regardless of how the cops chose to move, there exists a vertex $u$ in $N(v_S)$ that no cop can reach in one move, so the robber can move to $u$, thus entering $H$, and no cop can capture him in the following turn. Since $s>k$, after the cops' move there is a set $S'\in \mathcal{A}$ such that $v_{S'}$ is adjacent to $u$ and is not adjacent to any of the cops. Then the robber can move to $v_{S'}$. In this way the robber can evade the cops indefinitely and enter $H$ in every second step.
\end{proof}

\section{The 2-cop-move number of planar graphs}
\label{sect:planar}

Let us recall that $c_k(G)$ is the minimum value of $t$ such that $t$ cops can catch the robber even if at most $k$ cops are allowed to move in any step of the game. The purpose of this section is to prove a conjecture of Yang \cite{YangCCC2018} that $c_2(G)\leq 3$ for every connected planar graph $G$.
	
We say that an induced subgraph $H$ is \emph{$k$-leisurely-guardable} if it is $k$-guardable, and there exists a $k$-guarding strategy of $H$ using $k$ cops with the property that there is an integer $t$ and in every sequence of $t$ consecutive moves, there is a move in which at least one of these cops stays still.

\begin{lemma}\label{lax}
Let $G$ be a graph and $P$ an isometric path in $G$. If $P$ is bypath-free in $G$, then $P$ is $1$-leisurely-guardable.
\end{lemma}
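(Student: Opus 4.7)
The plan is to refine the wide-shadow strategy from Theorem \ref{Hellyguard} to a leisurely one, using bypath-freeness to guarantee that the wide shadow never collapses to a single vertex. Since a path is a Helly graph, Lemmas \ref{wideshadow} and \ref{lem:Hellyiscopwin} already give a 1-guarding strategy in which, after a finite initial chase, the cop occupies a vertex of $S_P(r)$ and can maintain this invariant by moving at most one step per turn. I would have the cop use the \emph{minimal-movement} version of this strategy once the chase is complete: after the robber moves, the cop stays at his current vertex $c$ if $c \in S_P(r)$, and otherwise takes the forced single step into $S_P(r)$ guaranteed by Lemma \ref{wideshadow}(ii). Writing $P = p_1 \cdots p_k$, note that $S_P(v)$ is an intersection of subpaths of $P$, hence itself a subpath $[p_{a(v)}, p_{b(v)}]$, with $|a(r') - a(r)| \leq 1$ and $|b(r') - b(r)| \leq 1$ for every edge $rr'$ (by the triangle inequality). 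The bypath-free hypothesis enters via Lemma \ref{bypath}: for every $v \notin V(P)$ we have $b(v) - a(v) \geq 1$, and if the robber enters $V(P)$ the cop captures in the next turn.

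The central claim is that in any maximal run of consecutive turns in which the cop is \emph{forced} to move, all of his moves go in the same direction along $P$. A brief case analysis shows that a forced rightward move at turn $t$ requires $c_{t-1} = p_{a(r_{t-1})}$ (the cop was sitting at the left end of the old shadow) and $a(r_t) = a(r_{t-1}) + 1$, leaving $c_t = p_{a(r_t)}$; symmetrically, a forced leftward move pins the cop to the right endpoints. A right move immediately followed by a left move would therefore require $c_t$ to be simultaneously the left and the right endpoint of $S_P(r_t)$, contradicting $b(r_t) - a(r_t) \geq 1$.

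With direction-consistency in hand, a run of $T$ consecutive forced rightward moves shifts $a$ by exactly $T$, and since $a(r) \in [1, k-1]$ (because $b(r) \leq k$ and $b(r) \geq a(r) + 1$), we obtain $T \leq k - 2$; the leftward case is symmetric. Hence, after the initial chase, the cop rests at least once in every $k-1$ consecutive turns, and combining this with the finite length of the chase yields a suitable integer $t$ witnessing 1-leisurely-guardability. The main obstacle is the direction-consistency claim, and it is precisely the width-at-least-$2$ guarantee from bypath-freeness that prevents the cop from being forced to zig-zag along $P$ indefinitely.
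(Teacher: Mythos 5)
Your proposal is correct and follows essentially the same route as the paper: occupy the wide shadow, move only when the robber's move pushes you out of it, and use bypath-freeness (via Lemma \ref{bypath}) to guarantee $\card{S_P(r)} \geq 2$ so that forced moves cannot continue for more than about the length of $P$. Your direction-consistency analysis of runs of forced moves makes explicit a counting step that the paper's proof asserts only tersely, so it is a refinement of the same argument rather than a different one.
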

	
\begin{proof}
Let $r$ denote the current position of the robber.
After a finite number of moves, we can get one cop $C$ to move to a vertex in the wide shadow of the robber in $P$. Once $C$ is in $S_P(r)$, he will stay still if he is in the changed wide shadow $S_P(r)$ after robber's turn, and will move when his position after the robber's turn is not in the wide shadow of the robber. By Lemma \ref{wideshadow}, the cop can always get back in the wide shadow of $r$ with a single move. By staying in the wide shadow of the robber, the cop can $1$-guard $P$.
		
If $P$ has length $\ell$, with $\ell \leq 2$, the cop can guard it without moving by simply staying at some vertex of $P$, so we may assume $\ell \geq 3$. Since $P$ is bypath-free in $G$, we have $\card{S_P(r)} \geq 2$, so the robber can only move at most $\ell$ consecutive times without entering $P$ and forcing the cop to move in order to stay in his wide shadow. Hence, after at most $\ell$ turns, the robber will either enter $P$ or the cop can stay still and be $1$-guarding $P$, so $P$ is $1$-leisurely-guardable.
\end{proof}

\begin{lemma}\label{bypath-free}
Let $G$ be a planar graph and $H$ a connected induced subgraph of $G$. If $v \in V(H)$, then one of the following holds:
\begin{itemize}
	\item[i.] $N[v] \supseteq V(H)$, or
	\item[ii.] there exists a non-trivial isometric path $P$ in $H$, containing $v$, which is bypath-free in $H$, or
	\item[iii.] for every $z \in V(H)$ there exist distinct neighbors $x_1,x_2$ of $v$ in $H$ and a non-neighbor $u\in V(H)$ such that $P_1=vx_1u$ and $P_2=vx_2u$ are isometric paths in $H$, and there is an induced subgraph $R\subseteq H$ that contains $z$ and $P_1\cup P_2$, such that $P_1$ is bypath-free in $R-x_2$ and $P_2$ is bypath-free in $R-x_1$.
\end{itemize}
\end{lemma}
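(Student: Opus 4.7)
The plan is to prove the trichotomy by a straightforward case analysis on whether (i) and (ii) fail. Suppose (i) fails, i.e., $N_H[v] \not\supseteq V(H)$; since $H$ is connected there is a vertex at distance exactly $2$ from $v$, so $N_H^2(v) \neq \varnothing$. If some non-trivial isometric path through $v$ is bypath-free in $H$ then conclusion (ii) holds at once, so the only remaining case is the one where every non-trivial isometric path through $v$ admits a bypath in $H$. Specialising this to the length-$2$ path $vxu$ for each $u \in N_H^2(v)$ and each common neighbor $x$ of $v$ and $u$ yields a bypath of $vxu$; since the identity $j-i = t-1$ from the bypath definition forces any such bypath to have length $2$ itself, it must take the form $vx'u$ for some $x' \neq x$, so $\card{N_H(v) \cap N_H(u)} \geq 2$ for every $u \in N_H^2(v)$.

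Next I will produce the data required by (iii) for an arbitrary $z \in V(H)$. If $z \in N_H(v) \cap N_H(u)$ for some $u \in N_H^2(v)$, pick such a $u$, set $x_1 := z$, and let $x_2$ be any other common neighbor of $v$ and $u$ (which exists by the previous step). Otherwise, pick any $u \in N_H^2(v)$ together with any two distinct common neighbors $x_1, x_2$ of $v$ and $u$. In either subcase, let
\[
 R := H - \bigl((N_H(v) \cap N_H(u)) \sm \{x_1, x_2\}\bigr).
\]
Then $R$ is an induced subgraph of $H$, it contains $z$ together with the four vertices $v, x_1, x_2, u$ of $P_1 \cup P_2$, and the only common neighbors of $v$ and $u$ that lie in $R$ are $x_1$ and $x_2$. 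Any bypath of the length-$2$ path $P_1 = vx_1u$ inside $R-x_2$ is of the form $vx'u$ for some common neighbor $x' \in V(R) \sm \{x_1, x_2\}$ of $v$ and $u$; no such $x'$ exists, so $P_1$ is bypath-free in $R - x_2$, and the argument for $P_2$ in $R - x_1$ is symmetric.

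The main verifications are that a bypath of an isometric length-$2$ path is itself a length-$2$ path between the same endpoints, and that $P_1$ remains isometric in $R - x_2$ (which is immediate, since $v$ and $u$ are non-adjacent there and are joined by $vx_1u$). Somewhat surprisingly, the plan above does not seem to invoke planarity of $G$ at any point; I suspect planarity is actually used downstream, when the lemma is plugged into the cops-and-robber strategy, to realise $R$ concretely as the induced subgraph lying inside one of the two disks cut off by the $4$-cycle $vx_1ux_2v$ in a planar embedding of $G$. The statement of the lemma itself, however, appears to be provable by the purely combinatorial argument sketched above.
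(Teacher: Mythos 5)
Your argument is correct for the lemma as literally stated, but it proves it by a genuinely different route from the paper, and the difference matters for how the lemma is used. You construct $R$ by deleting from all of $H$ every common neighbour of $v$ and $u$ other than $x_1$ and $x_2$; your two supporting observations (that failure of (ii) forces $\card{N_H(v)\cap N_H(u)}\ge 2$ for every $u$ at distance two from $v$, and that the identity $j-i=t-1$ forces any bypath of an isometric path of length two to be just another common neighbour of its endpoints) are both right, so $P_i$ is indeed bypath-free in $R-x_{3-i}$. The paper instead uses planarity inside this very proof, not only downstream: it notes that the $k\ge 2$ isometric paths $vx_iu$ cut the plane into quadrangular regions, relabels so that the region $F$ bounded by $P_1\cup P_2$ contains $z$, and takes $R$ to be the subgraph induced by $V(P_1\cup P_2)$ together with the vertices inside $F$. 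That $R$ contains no further common neighbours of $v$ and $u$, so bypath-freeness holds for the same reason as in your construction, but it has the additional property that $P_1\cup P_2$ separates the interior of $R$ from the rest of $H$. This separation is precisely what the proof of Theorem \ref{twocopmove} relies on when outcome (iii) is invoked to reach case $(b)$, where two cops guarding $P_1$ and $P_2$ confine the robber to the region between them; your $R$, which may be essentially all of $H$, does not supply it. So nothing in your proof is wrong relative to the written statement, but you have established a strictly weaker fact than the one the paper proves and needs; your closing guess that planarity enters when the lemma is applied is right in spirit, except that the paper resolves the issue here by choosing $R$ to be a face rather than deferring it.
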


\begin{proof}
Suppose that $N[v] \cap V(H) \neq V(H)$. Since $H$ is connected, there exists a vertex $u \in V(H)$ such that $d_H(u,v) = 2$. Let $x$ be a common neighbor of $v$ and $u$, $x \in N_H(v) \cap N_H(u)$. If $x$ is the only common neighbour, then $P=vxu$ is isometric and bypath-free in $H$.

Otherwise, $N_H(u) \cap N_H(v) = \brac{x_1, x_2, \dots, x_k}$ for some $k \geq 2$. Let $P_i = vx_iu$ for $i\in [k]$. Notice that the paths in $\mathcal{Q} = \brac{P_i\mid i \in [k]}$ are isometric, and they naturally induce a quadrangulation of the plane, where each quadrangular face has exactly two paths in $\mathcal{Q}$ as its boundary.

We may assume that the quadrangular face $F$ bounded by $P_1$ and $P_2$ contains the vertex $z$. Let $R$ be the subgraph of $H$ induced by $V(P_1\cup P_2)$ and all vertices in $F$. Note that $R$ contains $z$.
It remains to show that $P_i$ is bypath-free in $R-x_{3-i}$ (for $i=1,2$). But this is clear since $R$ contains none of the common neighbors of $v$ and $u$ except for $x_1$ and $x_2$.
\end{proof}

From now on, we assume that $G$ is a planar graphs with a fixed embedding in the plane. Then every cycle in $G$ bounds a disk. By considering an embedding of $G$ in the 2-sphere instead of the plane, we will be able to always assume that whenever we guard a cycle in $G$, we will assume the robber is in the curve's interior (by declaring the disk he is in as the interior). By keeping this in mind, given two internally disjoint paths $P$ and $Q$ with the same endpoints in $G$, we will use $R(P,Q)$ to denote the set of vertices contained in the interior of the disk bounded by $P \cup Q$.

The following lemma provides us with a way of forcing the robber to be inside a region whose boundary can be leisurely guarded.

\begin{lemma}\label{movepath}
Let $G$ be a planar graph, $u,v \in V(G)$, $P$ and $Q$ be internally disjoint $(u,v)$-paths in $G$. Suppose that
\begin{itemize}
	\item $P$ is isometric and bypath-free in $H = G[R(P,Q) \cup  V(P) \cup V(Q)]$ and
	\item $Q$ is isometric in $H_Q = G[R(P,Q) \cup V(Q)]$.
\end{itemize}
Then either $Q$ is bypath-free in $H_Q$ or there exists a $B$-bypath of $Q$ in $H_Q$ such that the following holds:
\begin{enumerate}
	\item[i.] $P$ is isometric and bypath-free in $G\left[R\left(P,Q_{\left\langle B \right\rangle}\right) \cup  V(P)\cup V\left(Q_{\left\langle B \right\rangle}\right) \right]$.
	\item[ii.] $Q$ is isometric and bypath-free in $G\left[R\left(Q,Q_{\left\langle B \right\rangle}\right) \cup  V(Q)\right]$.
	\item[iii.] $Q_{\left\langle B \right\rangle}$ is isometric in $G\left[R\left(P,Q_{\left\langle B \right\rangle}\right) \cup V\left(Q_{\left\langle B \right\rangle}\right)\right]$, and is isometric and bypath-free in 
\\
$G\left[R\left(Q,Q_{\left\langle B \right\rangle}\right) \cup  V(Q_{\left\langle B \right\rangle})\right]$.
\end{enumerate}
\end{lemma}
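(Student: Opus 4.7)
The plan is to settle the first alternative immediately and otherwise to push the boundary inward using a carefully chosen bypath of $Q$. If $Q$ is bypath-free in $H_Q$, the first alternative of the conclusion holds, so assume otherwise and pick a bypath $B$ of $Q$ in $H_Q$ that minimizes $\card{V(R(Q,Q_{\langle B\rangle}))}$. Let its branching vertices be $b_1=q_i$ and $b_t=q_j$. I will verify (i)--(iii) for this choice.

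Condition (i) is inherited from the hypothesis on $P$. Because $Q_{\langle B\rangle}$ lies in the closed disk bounded by $P\cup Q$, the graph $G[R(P,Q_{\langle B\rangle})\cup V(P)\cup V(Q_{\langle B\rangle})]$ is a subgraph of $H$ that contains $P$; isometricity and bypath-freeness of $P$ automatically pass to any such subgraph, since any shortcut or bypath of $P$ there would also be a shortcut or bypath of $P$ in $H$.

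For (ii), $Q$ is isometric in $H_Q$ and therefore also in $G[R(Q,Q_{\langle B\rangle})\cup V(Q)]$. Suppose $B'$ were a bypath of $Q$ in this subgraph. Its interior lies in the open region $R(Q,Q_{\langle B\rangle})$, whose boundary consists of $Q[q_i,q_j]$ together with the interior of $B$. By planarity the branching vertices of $B'$ must lie in $\{q_i,\ldots,q_j\}\subseteq V(Q)$, and a short shortcut argument (any $H_Q$-shortcut between two vertices of $Q_{\langle B'\rangle}$ would shorten $d_{H_Q}(u,v)=|Q|$ and contradict $Q$ being isometric in $H_Q$) lifts the isometricity of $Q_{\langle B'\rangle}$ from the subgraph to $H_Q$. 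Hence $B'$ is a bypath of $Q$ in $H_Q$, but $R(Q,Q_{\langle B'\rangle})\subsetneq R(Q,Q_{\langle B\rangle})$ because the interior of $B'$ is no longer in the open region; this contradicts the minimality of $B$.

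The main obstacle is (iii). The two isometricity statements reduce to the fact that $Q_{\langle B\rangle}$ is isometric in $H_Q$ (built into the definition of bypath), because both target subgraphs sit inside $H_Q$ and contain $Q_{\langle B\rangle}$. For bypath-freeness of $Q_{\langle B\rangle}$ in $G[R(Q,Q_{\langle B\rangle})\cup V(Q_{\langle B\rangle})]$, assume $B''$ is such a bypath. The same planarity argument, applied to the boundary of $R(Q,Q_{\langle B\rangle})$, now forces both branching vertices of $B''$ onto $V(B)=\{b_1,\ldots,b_t\}$ (the only vertices of $V(Q_{\langle B\rangle})$ on that boundary), say at $b_a$ and $b_c$ with $a<c$. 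I splice $B''$ into $B$ by defining $\tilde B = B[b_1,b_a]\,B''\,B[b_c,b_t]$. Using $|B|=j-i$ (forced by $Q_{\langle B\rangle}$ being isometric in $H_Q$) and $|B''|=c-a$ (forced by $B''$ being a bypath of $Q_{\langle B\rangle}$), one checks $|\tilde B|=|B|$, so $Q_{\langle\tilde B\rangle}$ has the same length as $Q$ and is therefore a shortest $u$--$v$ path in $H_Q$; any shortest path is isometric, so $\tilde B$ is a bypath of $Q$ in $H_Q$. Finally $R(Q,Q_{\langle\tilde B\rangle})\subsetneq R(Q,Q_{\langle B\rangle})$, because the interior of $B''$ now sits on the boundary of the new region rather than in its interior, contradicting the minimality of $B$.
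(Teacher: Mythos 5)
Your proof is correct and follows essentially the same strategy as the paper: pick a bypath $B$ of $Q$ in $H_Q$ whose region $R(Q,Q_{\left\langle B \right\rangle})$ is minimal (the paper uses inclusion-minimality, you use minimum cardinality, which is equivalent for the purpose), then derive (i) from monotonicity of the regions and (ii)--(iii) by showing any offending bypath would yield a bypath of $Q$ in $H_Q$ with a strictly smaller region. Your write-up is in fact somewhat more explicit than the paper's at two points it glosses over --- lifting a bypath from the subgraph back to $H_Q$ via the shortest-path argument, and the explicit splicing $\tilde B = B[b_1,b_a]B''B[b_c,b_t]$ for part (iii).
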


\begin{proof}
	Suppose that $Q$ is not bypath-free in $H_Q$ and let $B$ be a bypath of $Q$ in $H_Q$ such that $R(Q_{\left\langle B' \right\rangle},Q) \subseteq R(Q_{\left\langle B \right\rangle},Q)$ implies $B' = B$ for every $B'$-bypath of $Q$ in $H_Q$. Let us check that $B$ satisfies the desired properties.
	
	Clearly, $(i)$ follows from the fact that $R(P,Q_{\left\langle B \right\rangle}) \subseteq R(P,Q)$. Since $Q$ is isometric in $H_Q$, $Q$ and $Q_{\left\langle B \right\rangle}$ have the same length, and so are isometric in $G[R(Q,Q_{\left\langle B \right\rangle}) \cup V(Q) \cup B]$. If $Q$ has a bypath $B'$ in $G[R(Q,Q_{\left\langle B \right\rangle}) \cup V(Q)]$, then $R(Q,Q_{\left\langle B' \right\rangle}) \subseteq R\left(Q,Q_{\left\langle B \right\rangle}\right)$. Our choice of $B$ implies that $B = B'$, a contradiction. If $Q_{\left\langle B \right\rangle}$ has a bypath $B'$ in $G[R(Q,Q_{\left\langle B \right\rangle}) \cup V\left(Q_{\left\langle B \right\rangle}\right)]$,  take $Q' = (Q_{\left\langle B \right\rangle})_{\left\langle B' \right\rangle}$ so then $R(Q,Q') \subsetneq R\left(Q,Q_{\left\langle B \right\rangle}\right)$. In this case, there exists a bypath $B''$ in $H_Q$ such that $Q_{\left\langle B'' \right\rangle} = Q$, which contradicts the choice of $B$. This justifies $(ii)$ and $(iii)$.
\end{proof}

Using Lemmas \ref{lax} and \ref{movepath}, we will follow the usual strategy of Aigner and Fromme \cite{AignerDAM1984} to use two isometric paths to bound the subgraph where the robber can stay and use the third cop helping them to shrink the territory of the robber. At any point of the game, two cops will be leisurely guarding a subgraph $Z$ of $G$. Then we define the vertices of the component $H$ of $G-Z$ containing the robber as the \emph{robber's territory}. The rest of the vertices form the \emph{cop territory}. Notice that if the robber's territory is empty, that means the robber has been captured. The idea of the proof is to play the game using three cops in such way that we reduce the size of the robber territory until it is empty.

\begin{thm}\label{twocopmove}
 For every planar graph $G$ we have $c_2(G) \leq 3$.
\end{thm}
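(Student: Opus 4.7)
The plan is to follow the Aigner--Fromme template, maintaining throughout the game two cops $C_1, C_2$ who leisurely-guard (via Lemma \ref{lax}) two bypath-free isometric paths $P_1, P_2$ whose union separates the robber from the rest of $G$. The third cop $C_3$ is used to establish a new bypath-free isometric path inside the robber's region; once $C_3$ begins guarding it, one of $C_1, C_2$ can abandon its path and be reassigned, and the resulting boundary encloses a strictly smaller robber territory. Since the component of $G$ minus the boundary containing the robber strictly shrinks at each phase, the process terminates in capture.

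For the initialization, place $C_1$ on an arbitrary vertex $v_0$ and apply Lemma \ref{bypath-free} to $G$ at $v_0$: case (i) gives an immediate win with one cop; case (ii) yields a bypath-free isometric path through $v_0$ that $C_1$ can reach and begin leisurely-guarding; case (iii) is handled by combining the two candidate paths using Lemma \ref{movepath} to extract a single bypath-free isometric path. After $C_1$ is leisurely-guarding $P_1$, the robber lies in one component $R_0$ of $G - V(P_1)$, and a similar application of Lemma \ref{bypath-free} in $G[V(R_0) \cup V(P_1)]$ at a vertex of $P_1$ on the boundary of $R_0$ produces a second bypath-free isometric path $P_2$ for $C_2$ that slices $R_0$ into pieces. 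This establishes the invariant: two leisurely-guarded bypath-free isometric paths whose union bounds the robber's territory in the plane.

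For the reduction step, given paths $P_1, P_2$ with robber territory $R$, apply Lemma \ref{bypath-free} inside $H = G[V(R) \cup V(P_1) \cup V(P_2)]$ at a vertex on the boundary of $R$ to obtain either a new bypath-free isometric path reaching into $R$ (case ii), which $C_3$ travels to and starts guarding, or a two-path structure (case iii), which is refined via Lemma \ref{movepath} to yield the desired bypath-free path after replacing a portion of the current boundary by a bypath. Once $C_3$ is in position guarding a new path $P_3$, the robber's territory splits into smaller components; one of $C_1, C_2$ relinquishes its role, the new boundary becomes $P_3$ together with a sub-segment of the old boundary, and the enclosed region is strictly smaller. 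Using Lemma \ref{movepath} iteratively one may also contract the boundary inward during the approach phase of $C_3$.

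The main obstacle is the synchronization under the 2-cops-move restriction: with three cops and only two allowed to move per turn, one must verify that the scheduling works. The leisurely-guarding property of Lemma \ref{lax} is what makes this feasible: for a bypath-free path $P$ of length $\ell$, the guarding cop has a resting turn at least once in every $\ell + 1$ consecutive rounds, because the robber cannot force the wide shadow to shrink to a single vertex for more than $\ell$ consecutive turns. Thus, over any window of $\ell_1 + \ell_2 + 1$ turns, at least one of $C_1, C_2$ rests often enough that $C_3$ can be allocated moves at a positive rate. Provided this allocation is arranged so that $C_3$ always has room to advance when needed, $C_3$ reaches and establishes the new path in finitely many turns while the robber remains confined to $R$. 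Formalizing this scheduling carefully, especially during the transient phases when boundary paths are swapped and Lemma \ref{movepath} is applied, is the one delicate bookkeeping task of the proof.
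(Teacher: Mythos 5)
Your overall architecture---two cops leisurely-guarding bypath-free isometric paths whose union bounds the robber's territory, the third cop fed moves on the resting turns guaranteed by Lemma \ref{lax}, and Lemma \ref{movepath} used to trade a boundary path that is not bypath-free for one of its bypaths---is exactly the paper's strategy, and you correctly identify the scheduling under the two-moves-per-turn constraint as the point where leisurely guarding earns its keep. However, the engine of your induction step is misdescribed in a way that matters. You propose to obtain the new separating path $P_3$ by applying Lemma \ref{bypath-free} at a vertex on the boundary of the territory $R$. That lemma cannot do this job: as proved, its outcome (ii) only ever yields a path $vxu$ of length two through the chosen vertex, with no control over where its endpoints lie, so the path it returns need not have both endpoints on the current boundary and need not separate $R$; guarding it then makes no progress. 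The paper instead takes the first and last vertices $v_i,v_j$ of the currently guarded boundary path that have neighbours in the robber's component, joins them (or their neighbours) by a \emph{shortest} path $Q$ through that component, and only then invokes Lemma \ref{movepath} to repair $Q$ when it fails to be bypath-free; Lemma \ref{bypath-free} is used only for the initialization and for the cut-vertex case (c).

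A second, smaller problem is your treatment of outcome (iii) of Lemma \ref{bypath-free}: you cannot in general ``extract a single bypath-free isometric path'' from it, with or without Lemma \ref{movepath}. When $v$ and $u$ have $k\ge 2$ common neighbours $x_1,\dots,x_k$, every path $vx_iu$ admits $vx_ju$ ($j\neq i$) as a bypath, so no single bypath-free path between $v$ and $u$ exists; outcome (iii) is precisely the configuration that forces the two-path case, which the paper handles by placing two cops at $v$ and $u$ to guard the $4$-cycle $P_1\cup P_2$ (its case (b)). Since you already maintain a two-path invariant, this is repairable, but as written your initialization and your reduction step both lean on Lemma \ref{bypath-free} for something it does not provide. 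You should also make explicit the degenerate cases the paper tracks (a single attachment vertex, giving the cut-vertex case (c), and new paths of length at most two, which a stationary cop guards), since these are where the ``strictly smaller territory'' claim needs separate justification.
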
	

\begin{proof}
Throughout the proof, we will assume that $H$ is a subgraph of $G$ containing robber's territory and all vertices in the cop territory adjacent to the robber's territory. Initially, $H=G$ and all three cops are at the single vertex $v$. During the game,
we will distinguish three different situations the game can be in:
\begin{enumerate}
	\item[a)] A cop is guarding a nontrivial isometric and bypath-free path $P$ of a subgraph $H$ of $G$, and every path from the robber to the cop territory includes a vertex of $P$. Since $P$ is isometric and bypath-free, the cop can use the leisurely guarding strategy (Lemma \ref{lax}).
	\item[b)] Two cops are guarding $P \cup Q$, where $P$ and $Q$ are internally disjoint isometric and bypath-free paths joining the same two vertices, and any path from the robber to the cop territory includes a vertex of $P \cup Q$. The subgraph $H$ is either in the internal or external region bounded by $P\cup Q$ (we may assume it is in the internal region), including the boundary $P\cup Q$. Two cops are using the leisurely guarding strategy on $P$ and $Q$, respectively.
	\item[c)] A cop is at a vertex $v$ of a subgraph $H$ of $G$, and every path from the robber to the cop territory goes through $v$.
\end{enumerate}

When we say that a cop guards an isometric and bypath-free path in $H$, we will assume that the cop is using the 1-leisurely-guarding strategy given by Lemma \ref{lax}.

Let $C_1$, $C_2$ and $C_3$ be the cops and begin the game by placing them on the same vertex $v$. Let $r$ be the robber's position. If $v$ dominates $G$ we are done. Otherwise, Lemma \ref{bypath-free} guarantees that, by taking $H=G$, and moving at most two cops, we are in case $(a)$ or $(b)$: If we get property $(ii)$ in Lemma \ref{bypath-free}, then by moving one cop to leisurely guard $P$ we obtain case $(a)$. If instead we have outcome $(iii)$, then by moving two cops to $v$ and $u$, we get case $(b)$.

We will show that starting with case $(a)$, $(b)$ or $(c)$, we can move the cops and get again to one of the three cases, increasing the cop territory, and moving at most two cops every turn. We will use $T$ to denote the set of vertices in the cop territory.

Suppose we start with case $(a)$ and $C_1$ is leisurely-guarding $P = v_1v_2 \dots v_k$ with $k \geq 2$ in $H$. Let $Y$ be the component of $H-T$ containing the robber. Then $Y$ contains robber's territory. If there is a unique vertex of $P$ with neighbours in $Y$, say $x$,  then we can move $C_2$ to $x$, getting to case $(c)$.

If $P$ has more than one vertex with neighbours in $Y$, let $v_i$ and $v_j$, with $i < j$, be the first and last vertex of $P$ with neighbours in $Y$, respectively. Let $Q$ be a shortest $(v_i,v_j)$-path, whose internal vertices are in $Y$, and move $C_2$ to guard $Q$. The robber cannot enter the cop territory unless he enters $v_iPv_j$, so we may assume that $P$ is a $(v_i,v_j)$-path since $v_iPv_j$ is also isometric and bypath free in $H$. Notice that the robber is in a component of $Y-Q$ which is either in the bounded or unbounded face determined by $P \cup Q$. Without loss of generality, we may assume he is in the bounded face. If $Q$ can be leisurely guarded by $C_2$ we get case $(b)$, adding the internal vertices of $Q$ to the cop territory. Otherwise, $P$ is isometric and bypath-free in $G[R(P,Q) \cup  V(P)]$ and $Q$ is isometric in $H_Q = G[R(P,Q) \cup V(Q)]$, so we can apply Lemma \ref{movepath} and obtain a bypath $B$ in $H_Q$ as per conclusions of the lemma.

Since $P$ is being leisurely guarded by $C_1$, we can use the turns in which $C_1$ stays still to move $C_3$ and capture the robber's wide shadow on $Q_{\left\langle B \right\rangle}$. Once $C_3$ catches the robber's wide shadow on $Q_{\left\langle B \right\rangle}$, we have two possible outcomes:

\begin{enumerate}
	\item If $r \in  G[R(Q,Q_{\left\langle B \right\rangle}]$, then we get to case $(b)$ by possibly taking subpaths of $Q$ and $Q_{\left\langle B \right\rangle}$, since both paths would be leisurely guarded, and $C_1$ is free to move.
	\item If $r \in  G[R(P,Q_{\left\langle B \right\rangle}]$, cop $C_2$ is free to move and we consider $C_1$ and $C_3$ guarding $P$ and $Q_{\left\langle B \right\rangle}$, respectively. Also, we can apply Lemma \ref{movepath} again until the robber's territory is empty or he is in a region bounded by two leisurely guarded paths, arriving to case $(b)$.
\end{enumerate}

In any case, we have increased the cop territory at least with the bypath $B$.

Now, suppose we start with case $(b)$, with $C_1$ and $C_2$ leisurely-guarding $P$ and $Q$, respectively. Let $Y$ be the component of $H-\left(P \cup Q\right)$ containing the robber. Without loss of generality, we may assume $Y$ is contained in the interior of $P \cup Q$. If only one vertex $v$ of $Y$ has neighbours in $P \cup Q$, then we can use the turns when $C_1$ and $C_2$ stay still to place $C_3$ on $v$ and get to case $c$.

If $P \cup Q$ has exactly two neighbours in $Y$, say $u$ and $v$, let $S$ be a shortest $uv$-path in $Y$. We can move $C_3$ to guard $S$ during the turns when $C_1$ or $C_2$ stay still. Once $C_3$ is guarding $S$, the robber cannot enter the cop territory since he could only do it using $u$ or $v$, which are being covered by $C_3$. This means $C_1$ and $C_2$ are free.

If $S$ has length at most two, we get to case $(a)$ by placing a cop in the middle vertex of $S$ (or any vertex in the case of length one). If $S$ has length more than two, then we can add two vertices, $x$ and $y$, to the graph and make them adjacent to both $u$ and $v$, and draw them one to the left and one to the right of the graph induced by $V(P) \cup V(Q) \cup Y$ (these additional vertices are included only in order to apply Lemma \ref{movepath}). The paths $uxv$, $S$ and $uyv$ form a theta-graph, and the robber is in a component $Y$ of $G-S$ inside the left or the right bounded region defined by the theta graph. Without loss of generality, we may assume he is the region bounded by $P = uxv$ and $S$. The fact that $S$ has length more than two implies $P$ is isometric and bypath-free in the graph, so by applying Lemma \ref{movepath} and moving at most two cops each turn, we can arrive to one of two situations:

\begin{enumerate}
	\item The robber is trapped between $P$ and a path $Q$ which is being leisurely guarded. Then $Q$ is an isometric and bypath-free path in $G[Y]$.
	\item There are two paths $Q$ and $Q'$, such that the robber is in $G[R(Q,Q') \cup V\left(Q\right) \cup V(Q')]$, $Q$ is isometric and bypath-free in $G[R(Q,Q') \cup V(Q)]$, and $Q'$ is isometric and bypath-free in $G[R(Q,Q') \cup V(Q')]$, and both $Q$ and $Q'$ are being leisurely guarded.
\end{enumerate}

Since we get to case $(a)$ in the first situation, and case $(b)$ in the latter, and we add the vertices of $Q\cap Y$ or $Q'\cap Y$ to the cop territory, we make progress.

We may now assume that at least one of $P$ and $Q$ has at least two neighbours in $Y$. Without loss of generality, we may assume $P = v_1v_2 \dots v_k$, with $k \geq 2$, has two distinct neighbours in $Y$. Let $v_i$ and $v_j$ ($i < j$) be the first and last vertex of $P$ with neighbours in $Y$, and let $u_1$ and $u_2$ be their neighbours in $Y$, respectively. Let $S$ be a shortest $(u_1,u_2)$-path in $Y$.

In this case, we can move $C_3$ during the turns when $C_1$ or $C_2$ can stay still and capture the robber's wide shadow in $uPv_iSv_jv$. At this point, the robber will be in one of the bounded regions in the interior of the theta graph formed by $P,Q$ and $S$. Without loss of generality, he is in $R(P,S)$. Since we can leisurely-guard $P$ and we are guarding $S$, cop $C_2$ is free. By applying Lemma \ref{movepath}, we can arrive in a situation where the robber will be trapped in a region bounded by two paths, each of which is leisurely guarded. Again, we have increased the cop territory, and we get to case $(b)$.

Finally, assume we are in case $(c)$. By applying Lemma \ref{bypath-free} it is easy to see that properties $(i)$--$(iii)$ get us to cases $(a)$--$(c)$, respectively, by moving at most two cops in each turn. As before, this step shrinks robber's territory.
\end{proof}

\bibliography{HellyShadowBIB}

\begin{thebibliography}{10}

\bibitem{AignerDAM1984}
M.~Aigner and M.~Fromme.
\newblock A game of cops and robbers.
\newblock {\em Discrete Appl. Math.}, 8(1):1--11, 1984.

\bibitem{BalCPC2015}
Deepak Bal, Anthony Bonato, William~B. Kinnersley, and Pawe{\l} Pra{\l}at.
\newblock Lazy cops and robbers on hypercubes.
\newblock {\em Combin. Probab. Comput.}, 24(6):829--837, 2015.

\bibitem{BandeltJCTB1991}
Hans-J\"urgen Bandelt and Erich Prisner.
\newblock Clique graphs and {H}elly graphs.
\newblock {\em J. Combin. Theory Ser. B}, 51(1):34--45, 1991.

\bibitem{BonatoA2017}
Anthony Bonato and Bojan Mohar.
\newblock Topological directions in cops and robbers.
\newblock {\em J. Comb.}, 11(1):47--64, 2020.

\bibitem{BonatoAMS2011}
Anthony Bonato and Richard~J. Nowakowski.
\newblock {\em The game of cops and robbers on graphs}, volume~61 of {\em
  Student Mathematical Library}.
\newblock American Mathematical Society, Providence, RI, 2011.

\bibitem{ClarkeT2002}
Nancy Elaine~Blanche Clarke.
\newblock {\em Constrained cops and robber}.
\newblock ProQuest LLC, Ann Arbor, MI, 2002.
\newblock Thesis (Ph.D.)--Dalhousie University (Canada).

\bibitem{DaGrKl63}
Ludwig Danzer, Branko Gr\"{u}nbaum, and Victor Klee.
\newblock Helly's theorem and its relatives.
\newblock In {\em Proc. {S}ympos. {P}ure {M}ath., {V}ol. {VII}}, pages
  101--180. Amer. Math. Soc., Providence, R.I., 1963.

\bibitem{GaoCOA2017}
Ziyuan Gao and Boting Yang.
\newblock The cop number of the one-cop-moves game on planar graphs.
\newblock In {\em Combinatorial optimization and applications. {P}art {II}},
  volume 10628 of {\em Lecture Notes in Comput. Sci.}, pages 199--213.
  Springer, 2017.

\bibitem{Seb_thesis}
Sebasti\'an Gonz\'alez Hermosillo de~la Maza.
\newblock {\em Cops and robbers with speed restrictions}.
\newblock {Ph.D.} {Thesis}, Simon Fraser University, 2019.

\bibitem{HellOLS2004}
Pavol Hell and Jaroslav Ne\v{s}et\v{r}il.
\newblock {\em Graphs and homomorphisms}, volume~28 of {\em Oxford Lecture
  Series in Mathematics and its Applications}.
\newblock Oxford University Press, Oxford, 2004.

\bibitem{LuWang18}
Linyuan Lu and Zhiyu Wang.
\newblock A note on 1-guardable graphs in the cops and robber game, 2018.

\bibitem{NowakowskiDM1983}
Richard Nowakowski and Peter Winkler.
\newblock Vertex-to-vertex pursuit in a graph.
\newblock {\em Discrete Math.}, 43(2-3):235--239, 1983.

\bibitem{OffnerAJC2014}
David Offner and Kerry Ojakian.
\newblock Variations of cops and robber on the hypercube.
\newblock {\em Australas. J. Combin.}, 59:229--250, 2014.

\bibitem{QuilliotD1978}
Alain Quilliot.
\newblock {\em Jeux et points fixes sur les graphes}.
\newblock {Th`ese de 3\`eme cycle}, Universit\'e de Paris VI, 1978.

\bibitem{Quilliot1983}
Alain Quilliot.
\newblock {\em Homomorphismes, points fixes, r\'etractions et jeux de poursuite
  dans les graphes, les ensembles ordonn\'es et les espaces m\'etriques}.
\newblock {Th\`ese} de doctorat d'{E}tat, Universit\'e de Paris VI, 1983.

\bibitem{SullivanA2016}
Brendan~W. Sullivan, Nikolas Townsend, and Mikayla Werzanski.
\newblock The $3\times 3$ rooks graph is the unique smallest graph with lazy
  cop number $3$.
\newblock {\em arXiv: 1606.08485}, 2016.

\bibitem{YangCCC2018}
Boting Yang.
\newblock The one-cop-moves game.
\newblock Talk at 28th Coast Combinatorics Conference, University of Victoria,
  2018.

\end{thebibliography}
\bibliographystyle{plain}

\end{document}